\DeclareMathOperator{\aut}{Aut}
 \DeclareMathOperator{\sym}{Sym}
\def\tm#1{\item[{\rm (#1)}]}
\def\css{\begin{cases}}
\def\ecss{\end{cases}}
\def\nmrt{\begin{enumerate}}
\def\enmrt{\end{enumerate}}
\def\tm#1{\item[{\rm (#1)}]}
\newtheorem{formula}{}[section]
\newtheorem{proposition}[formula]{Proposition}
\newtheorem{definition}[formula]{Definition}
\newtheorem{corollary}[formula]{Corollary}
\newtheorem{lemma}[formula]{Lemma}
\newtheorem{theorem}[formula]{Theorem}
\newtheorem{example}[formula]{Example}
\begin{document}

\title{The generalized X-join of Cayley graphs}
\author{
\ Allen Herman\thanks{The work of the first author was supported by a grant from NSERC.},  Javad Bagherian, and Hanieh Memarzadeh
}
\maketitle

\begin{abstract}
As a main result of this paper we give conditions under which  the generalized $X$-join of Cayley graphs is a Cayley graph. In particular, we show that $X$-join of isomorphic Cayley graphs is a Cayley grpah. To do this, new properties for a generalized wreath product of permutation groups are given in the case where the base group acts regularly.  These are used to give conditions for the generalized wreath product to contain a regular subgroup, which are then applied to generalized $X$-joins of Cayley graphs.  Along the way, it is shown that the generalized $X$-join of isomorphic Cayley graphs will always be a vertex transitive graph.
\end{abstract}
\smallskip

\noindent {\it Key words:} $X$-join; Lexicographic product; Cayley graphs; Generalized wreath product.

\noindent
\newline {\it AMS Classification:} Primary: 05C25; Secondary: 05C50.

\section{Introduction}

In this article we investigate properties of the generalized $X$-join of graphs introduced in \cite{BM}.  This graph product generalizes the $X$-join of graphs originally introduced by Sabidussi and Hemminger in \cite{Sab3} and \cite{H}.  Given a graph $G$ with vertex set $\{1,\dots,n\}$, and $n$ graphs $H_1, \dots, H_n$, the {\it $G$-join} \;$G[H_1,\dots,H_n]$ is the graph that extends the disjoint union of the graphs $H_1,\dots,H_n$ by adding edges between all vertices of $H_i$ and $H_j$ when $(i,j)$ is an edge in $G$ for $1 \le i,j \le n$. (This construction does not require the graphs to be ordinary, nor finite, and applies equally well to looped graphs or digraphs.) This was treated for ordinary graphs under the name generalized composition by Schwenk \cite{Schw}, where he showed that if the $H_i$ are regular graphs, then the collection $\{H_1,\dots,H_n\}$ gives an equitable partition of $G[H_1,\dots,H_n].$  One recovers the lexicographic product $G \circ H$ in the case where all of the graphs $H_i$ are copies of the same graph $H$. Recently some articles have appeared establishing new properties for generalized compositions of graphs (see, for instance, \cite{CAAR2013}, \cite{TWW}) and studying their applications in hierarchical and computer networks (see \cite{AK} and \cite{G2010}).

The definition of the generalized $X$-join $G \circ_{\lambda} \{H_1,\dots,H_n\}$ we investigate here is a modification of the one for generalized composition, the latter corresponding to the case where the elements of the defining partition of $G$ into $n$ disjoint subsets are all singletons (see \cite[Definition 2]{AK}).  The definition is quite flexible compared to the lexicographic product or the $X$-join, as it applies to many other graphs. In \cite{BM}, the last two authors considered the question of how to recognize a graph as a generalized $X$-join of graphs. They showed that the automorphism group of $G \circ_{\lambda} \{H_1,\dots,H_n\}$ contains the generalized wreath product $\aut(G) \circ_{\lambda} \{\aut(H_1),\dots,\aut(H_n)\}$ as a subgroup, and gave conditions for these to be equal.  (For the definition of the generalized wreath product of groups, see \cite{Ba}, \cite{ip3}.) Their result generalizes several earlier results for the automorphism group of lexicographic products \cite{DM}, \cite{Sab1}, \cite{Sab2}, and the $X$-join \cite{H}.

Graphs that are the direct, lexicographic, wreathed, or generalized wreathed product of Cayley graphs have been shown to be a Cayley graph (see \cite{D}, \cite{hand}, and \cite{Fen}), but we do not know if this will hold for the generalized $X$-join of Cayley graphs.
Note that it is also  unknown  for $X$-join of Cayley graphs.
The main result of this article gives conditions for the generalized $X$-join of Cayley graphs to be a Cayley graph.
In particular, it shows that the $X$-join of isomorphic Cayley graphs is a Cayley graph.

In Section 2 we review the definitions of the generalized $X$-join of graphs and generalized wreath product of groups from $\cite{BM}$ and explore conditions where a generalized wreath product of groups contains a regular subgroup in a natural way.  In Section 3 we study properties of the generalized $X$-join of Cayley graphs.  When $W$ is the generalized $X$-join of Cayley graphs defined over the groups $A$ and $C_X$ for $X \in \Sigma$, we find a natural condition under which the generalized wreath product $A \circ_{\lambda} \{C_X\}_{X \in \Sigma }$ will be a subgroup of $\aut(W)$ that acts transitively on vertices.   We then give conditions for the generalized $X$-join of Cayley graphs to be a Cayley graph, and conclude with two examples.

We remark that, at least in spirit, the generalized $X$-join construction discussed here is already being applied in the ``lifting Hamilton cycles" attacks on Lovaczs' conjecture that every finite Cayley graph should have a Hamilton cycle (see \cite[pg.~5493]{KM}).   Indeed, it is relatively easy to see that a graph which is a generalized $X$-join of graphs that have Hamilton cycles will have a Hamilton cycle itself.

\section{Preliminaries}

Throughout this paper, $G = (V,E)$ will denote an ordinary graph; i.e.~a finite undirected graph with no multiple edges whose vertex set is $V = V (G)$ and edge set is $E = E(G)$.  If all pairs of vertices of a subgraph $H$ of $G$
that are adjacent in $G$ are also adjacent in $H$, then $H$ is an induced subgraph.  For $X \subseteq V$ we write $G[X]$ for the subgraph of $G$ induced by $X$.

For two graphs $G=(V,E)$ and $H=(V',E')$, a graph homomorphism $f:G\rightarrow H$ is a mapping $f:V\rightarrow V'$ such that $(f(u),f(v))\in E'$ whenever $(u,v)\in E$.   If $f:V\rightarrow V'$ is a bijection and $f^{-1}:H\rightarrow G$ is also a graph homomorphism, then $f:G\rightarrow H$ is called a graph isomorphism, and we say that $G$ and $H$ are isomorphic.

For a finite set $V$, the group of all permutations of $V$ is denoted by $\sym(V)$.  The automorphism group of a graph $G=(V,E)$ is the subgroup $\aut(G)$ of $\sym(V)$ consisting of the permutations of $V$ that induce automorphisms of the graph $G$.   Every subgroup of $\sym(V)$ is called a permutation group on $V$.  For $F \leq \sym(V)$ and $X\subseteq V$, the setwise stabilizer of $X$ in $F$ is $\{f \in F : X^f=X\}$ and the pointwise stabilizer of $X$ in $F$ is $F_{(X)}=\{f\in F : x^f=x, \, \forall x\in X\}.$  By a system of blocks $\Sigma$ for a permutation group $F\leq \sym(\Omega)$ we mean
\nmrt
\tm{1} $\Sigma$ is a partition of $\Omega$; and
\tm{2} for every $X \in \Sigma$ and every $f\in F$, $X^f\cap X=\varnothing $ or $ X^f=X.$
\enmrt
If $\Sigma$ is a system of blocks for $F$ and $X\in \Sigma$, we write $F^X$ for the setwise stabilizer of $X$ in $F$, viewed as a permutation group on $X$.  For other terminology and notation not defined here, see \cite{BM}.

\medskip
\noindent {\bf 2.1 The generalized $X$-join of graphs}

\begin{definition}
Let $G=(V,R)$ be a graph and $\Sigma$ be a partition of $V$. Suppose that for every $X\in \Sigma$ there exist a graph $ B_X=(Y_X,E_X)$
and a graph epimorphism $\lambda_X:Y_X\rightarrow X$ from $B_X$ onto $G[X]$. Put $Y=\bigcup _{X\in \Sigma}Y_X$ and $\lambda=\bigcup_{X\in \Sigma}\lambda_X$. We define a graph $W$ with vertex set $Y$ and edge set $E$ such that $(y,y')\in E$ if
\nmrt
\tm{1} either $(y,y')\in E_X$, for some $X\in \Sigma$;
\tm{2} or $(y,y')\in \lambda _X^{-1}(x)\times \lambda _{X'}^{-1}(x')$ where $ X\neq X'$ and $(x,x')\in R$.
\enmrt
We call the graph $W=(Y,E)$ the {\it generalized X-join} of $G$ and $\{B_X\} _{X\in \Sigma }$ with respect to $\lambda$, and we denote it by $G\circ_\lambda\{B_X\}_{X\in\Sigma}$.
\end{definition}

\begin{example} {\rm
Suppose that $G$,  $B_X$ and $ B_{X'}$ be the graphs in Figure 1 with vertex sets $V=\{1,2,3,4\}$, $Y_X=\{a,b,c,d\}$ and $Y_{X'}=\{a',b',c',d'\}$, respectively.

\begin{figure}[htb]
\begin{center}
\begin{tikzpicture}
\filldraw (-1,4) circle [radius=0.04] node  [above] {$1$} -- (1,4) circle [radius=0.04] node  [above] {$2$} -- (1,3) circle [radius=0.04] node  [below] {$3$} -- (-1,3) circle [radius=0.04] node  [below] {$4$} -- (-1,4) circle [radius=0.04] node  [above] {$1$};
\filldraw (1,4) circle [radius=0.04] node  [above] {$2$} -- (-1,3) circle [radius=0.04] node  [below] {$4$};
\end{tikzpicture}
\subfloat{$G$}\;\;\;\;\;\;
\begin{tikzpicture}
\filldraw (3,4) circle [radius=0.04] node  [above] {$a$} -- (5,3) circle [radius=0.04] node  [below] {$b$};
 \filldraw (5,4) circle [radius=0.04] node  [above] {$c$} -- (3,3) circle [radius=0.04] node  [below] {$d$};
\end{tikzpicture}\;
\subfloat{$B_X$}\;\;\;\;\;\;
\begin{tikzpicture}
\filldraw (9,4) circle [radius=0.04] node  [above] {$e$}  -- (9,3) circle [radius=0.04] node [below] {$g$} -- (7,3) circle [radius=0.04] node  [below] {$f$};
\filldraw (9,4) circle [radius=0.04] node  [above] {$e$} -- (7,3) circle [radius=0.04] node  [below] {$f$};
\end{tikzpicture}
\subfloat{$B_{X'}$}\;\;\;\;\;
\end{center}
\caption{The graphs $G$, $B_X$, and $B_{X'}$}
\end{figure}
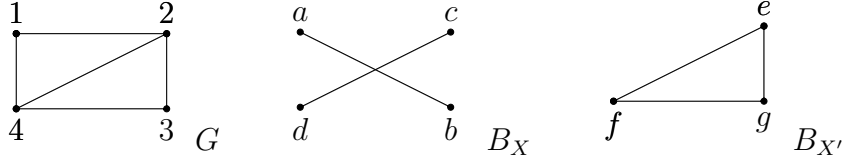

\begin{figure}[htb]
\begin{center}
\begin{tikzpicture}
\filldraw (7,-1.5) circle [radius=0.04] node  [right] {$d$} -- (3,-1.5) circle [radius=0.04] node  [left] {$c$};
\filldraw (1,-1.5) circle [radius=0.04] node [right] {$b$};
\filldraw (0,1) circle [radius=0.04] node [above] {$f$} -- (4,1) circle [radius=0.04] node [above] {$g$};
\filldraw (0,1) circle [radius=0.04] node [above] {$f$} -- (-3,-1.5) circle [radius=0.04] node [left] {$a$} -- (4,1) circle [radius=0.04] node  [above] {$g$} -- (1,-1.5) circle [radius=0.04] node [right] {$b$} -- (0,1) circle [radius=0.04] node  [above] {$f$} -- (3,-1.5) circle [radius=0.04] node [left] {$c$} -- (4,1) circle [radius=0.04] node  [above] {$g$} -- (7,-1.5) circle [radius=0.04] node [right] {$d$} -- (0,1) circle [radius=0.04] node [above] {$f$};
\filldraw (-3,-1.5) circle [radius=0.04] node [left] {$a$} -- (1,-1.5) circle [radius=0.04] node  [right] {$b$};
\filldraw (4,1) circle [radius=0.04] node  [above] {$g$} -- (2,2) circle [radius=0.04] node [above] {$e$} -- (0,1) circle [radius=0.04] node [above] {$f$};
\filldraw (2,2) circle [radius=0.04] node [above] {$e$} -- (-3,-1.5) circle [radius=0.04] node [left] {$a$};
\filldraw (2,2) circle [radius=0.04] node [above] {$e$} -- (1,-1.5) circle [radius=0.04] node [right] {$b$};
\filldraw (2,2) circle [radius=0.04] node [above] {$e$} -- (3,-1.5) circle [radius=0.04] node [left] {$c$};
\filldraw (2,2) circle [radius=0.04] node [above] {$e$} -- (7,-1.5) circle [radius=0.04] node [right] {$d$};
\end{tikzpicture}
\end{center}
\caption{The graph $W=G\circ_\lambda\{B_X\}_{X\in\Sigma}$}
\end{figure}
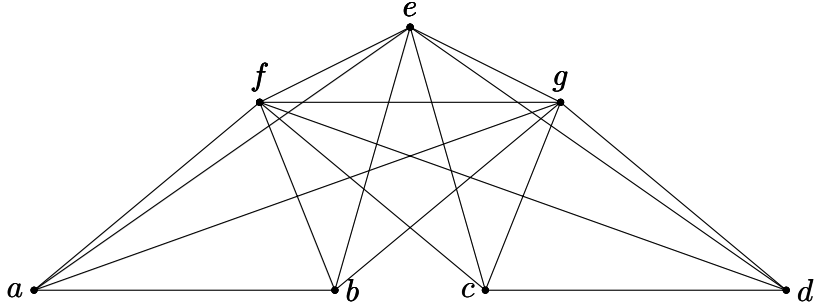

Consider the partition $\Sigma=\{X=\{1,3\},X'=\{2,4\}\}$ of $V$. Suppose the graph epimorphisms  $\lambda_X:B_X\rightarrow G[X]$ and $\lambda_{X'}:B_{X'}\rightarrow G[X']$ are given by the following:
$$\begin{array}{rclcrcl}
\lambda_X(a) &=& \lambda_X(b) \; = \; 1 & \qquad \qquad & \lambda_{X'}(e) &=& 2 \\
\lambda_X(c)&=&\lambda_X(d) \; = \; 3 & & \lambda_{X'}(f) &=& \lambda_{X'}(g) \; = \; 4.
\end{array}$$
Then the generalized X-join of $G$ and $\{B_X, B_{X'}\}$ with respect to $\lambda$ is the graph in Figure 2.
}
\end{example}

\medskip
\noindent {\bf 2.2 Generalized wreath products with a regular base group}

\medskip
Here we review the generalized wreath product construction from \cite{Ba} in the special case where the base group $F$ acts regularly.  Up to permutation isomorphism we can assume $F$ acts regularly on itself by right multiplication.  If $\Sigma$ is a system of blocks for this action, then for a fixed $X \in \Sigma$ and $x_0 \in X$, we will have that $H = \{ f \in F : x_0f \in X \}$ is a subgroup of $F$, and the map $x_0f \mapsto f$ for $f \in H$ gives a bijection from $X$ onto $H$.  For all $X' \in \Sigma$, we can choose an $f_{X'} \in F$ with $X' = Xf_{X'}$, and set $f_X = 1_F$ for convenience.  Let $\mathcal{S} = \{f_{X'} : X' \in \Sigma \}$ be this fixed set of right coset representatives for $H$ in $F$.  With these choices, the setwise stabilizer $F^{X'} = \{ f \in F : X'f = X' \}$ can be identified with the conjugate subgroup $H^{f_{X'}}$ for all $X' \in \Sigma$, and so conjugation by $f_{X'} \in \mathcal{S}$ defines an isomorphism $\mu_{XX'}: F^{X} \rightarrow F^{X'}$.  Also, for each $X' \in \Sigma$, right multiplication by $f_{X'} \in \mathcal{S}$ defines a bijection $\varepsilon_{XX'}: X \rightarrow X'$.  Given any two elements $X',X'' \in \Sigma$, we let $\mu_{X'X''}:F^{X'} \rightarrow F^{X''}$ be conjugation by $f_{X'}^{-1} f_{X''}$ and $\varepsilon_{X'X''}:X' \rightarrow X''$ to be equal to right multiplication by $f_{X'}^{-1}f_{X''}$.

Suppose $\{M_{X'}: X' \in \Sigma\}$ is a set of pairwise isomorphic groups indexed by $\Sigma$.  For our fixed $X \in \Sigma$, let $\theta_X: M_X \rightarrow H$ be a fixed group epimorphism with kernel $K_X$.   Since $\theta_X: M_X \rightarrow H$ is an epimorphism, for all $h \in H$ we can choose $t_h \in M_X$ such that $\theta_X(t_h) = h$, and set $t_{1_F} = 1_{M_X}$.  The set $T_H = \{t_h : h \in H\}$ will be a set of right coset representatives for $K_X$ in $M_X$.  For each $X' \in \Sigma$, we fix a group isomorphism $\psi_{XX'}: M_X \rightarrow M_{X'}$.   Then we can define a group epimorphism $\theta_{X'}:M_{X'} \rightarrow F^{X'}=H^{f_{X'}}$ by setting $\theta_{X'}(m') = \mu_{XX'}(\theta_X(\psi_{XX'}^{-1}(m')))$ for all $m' \in M_{X'}$.  Then these maps have the property that for all $m \in M_X$,
$$ \theta_{X'}(\psi_{XX'}(m))=\mu_{XX'}(\theta_X(m)). $$
Given any pair $X',X'' \in \Sigma$, the map $\psi_{X'X''}=\psi_{XX''} \circ \psi_{XX'}^{-1}$ will satisfy
$$ \theta_{X''}(\psi_{X'X''}(m')) = \mu_{X'X''}(\theta_{X'}(m')), \quad \mbox{ for all } m' \in M_{X'}. $$
For each $X' \in \Sigma$, let $K_{X'}$ be the kernel of $\theta_{X'}$, so $M_{X'}/K_{X'} \simeq F^{X'}$ via the canonical isomorphism.

Let $Y_X = M_X$ as sets, and let $M_X$ act on $Y_X$ by right multiplication.  Let $\lambda_X: Y_X \rightarrow X$ be defined by $\lambda_X(y) = \theta_X(y)$ for all $y \in Y_X$. This map has the property that for all $y \in Y_X$ and $m \in M_X$, $\lambda_X(ym) = \lambda_X(y)\theta_X(m)$.  For each $X' = Hf_{X'}$, let $Y_{X'}$ be a copy of $Y_{X}$.  Let $\phi_{XX'}: Y_X \rightarrow Y_{X'}$ be a fixed bijection.  Then there is a unique surjection $\lambda_{X'}:Y_{X'} \rightarrow X' = Hf_{X'}$ satisfying
$$ \lambda_{X'}(\phi_{XX'}(y))=\varepsilon_{XX'}(\lambda_X(y)), \mbox{ for all } y \in Y_X.$$
We can define an action of $M_{X'}$ on $Y_{X'}$ such that for all $y \in Y_X$ and $m \in M_X$,
$$\phi_{XX'}(y)\psi_{XX'}(m)=\phi_{XX'}(ym).$$
For every pair $X',X'' \in \Sigma$, we set $\phi_{X'X''}:Y_{X'} \rightarrow Y_{X''}$ to be the bijection $\phi_{X'X''} = \phi_{XX''}\circ \phi_{XX'}^{-1}$.  These bijections satisfy the compatibility relation $\lambda_{X''}(\phi_{X'X''}(y')) = \varepsilon_{X'X''}(\lambda_{X'}(y'))$ for all $y' \in Y_{X'}$, and furthermore, for all $y' \in Y_{X'}$ and $m' \in M_{X'}$, $\phi_{X'X''}(y')\psi_{X'X''}(m') = \phi_{X'X''}(y'm').$

Now, let $Y = \displaystyle{\bigcup_{X' \in \Sigma}} Y_{X'}$, and let $K = \prod_{X' \in \Sigma} K_{X'}$.  We view $K$ as a subgroup of $Sym(Y)$ in the following way:  if $k = \prod_{X'} k_{X'}$ for some independent choices $k_{X'} \in K_{X'}$ for $X' \in \Sigma$, then $y \cdot k = y k_{X'}$ whenever $y \in Y$ belongs to $Y_{X'}$.  As a subgroup of $Sym(Y)$, $K$ contains the natural {\it diagonal} subgroup $J = \{ \prod_{X' \in \Sigma} \psi_{XX'}(\ell) : \ell \in K_X \}$. For $\ell \in K_X$, we let $\hat{\ell} = \prod_{X'} \psi_{XX'} (\ell) \in J$.

For each $f \in F$, we associate $f$ to a permutation $\overline{f} \in Sym(Y)$ that lifts right multiplication by $f$ as a map from $F$ to $F$.  Given $y \in Y_{X'} \subset Y$, either $X' f = X'$ or $X' f = X'' \ne X'$.  If $X' f = X'$, then $f_{X'} f = h f_{X'}$ for some $h \in H$.  In this case $y \overline{f}$ is defined to be $y \overline{f} = y \psi_{XX'}(t_h)$.   If $X' f = X'' \ne X'$, then $f_{X'} f = h f_{X''}$ for some $h \in H$, so $f = h^{f_{X'}} f_{X'}^{-1} f_{X''}$.  So in this case we define $y \overline{f} = \phi_{X'X''}(y \psi_{XX'}(t_h))$.
(This definition is equivalent to that of \cite{Ba} in the case where the base group $F$ has a regular action.)  We remark that for each $f \in F$, the definition of $\bar{f}$ is dependent upon the choice of coset representatives for the elements of $T_H$, so in fact we have a precise definition of $\bar{f}$ only up to composition by an element of $K$.  Intuitively, think of $Y_{X'}$ as $Y_X \overline{f_{X'}}$, and think of right multiplication by $\overline{f_{X'}}$ for $f_{X'} \in \mathcal{S}$ as the map $\phi_{XX'}$, since this lifts right multiplication $H \rightarrow Hf_{X'}$ to a map $Y_X \rightarrow Y_{X'}$.  When $f_{X'}f = h f_{X''}$ for $X'' \in \Sigma$, then the restriction of $\overline{f}$ to $Y_{X'}$ will take the same image on $Y_{X'}$ as $\overline{f_{X'}}^{-1} \overline{h} \,  \overline{f_{X''}}$.  We can also intuitively think of $M_{X'}$ as the conjugate $(M_X)^{\overline{f_{X'}}}$, so that the identity $\phi_{XX'}(y) \psi_{XX'}(m) = \phi_{XX'}(ym)$ for $y \in Y_X$ and $m \in M_X$ becomes $(y \overline{f_{X'}}) (m^{\overline{f_{X'}}}) = (ym) \overline{f_{X'}}$.

Let $\overline{F} = \{ \overline{f} : f \in F \}$.  $\overline{F}$ is a subset of $Sym(Y)$, which is a subgroup only under a strict condition.

\begin{lemma} The following are equivalent:
\nmrt
\tm{i} The map $f \mapsto \overline{f}$, for all $f \in F$, is a group isomorphism $F \simeq \overline{F}$.
\tm{ii} $T_H$ is a group.
\tm{iii} $M_X = K_X \rtimes T_H \simeq K_X \rtimes H$.
\enmrt
\end{lemma}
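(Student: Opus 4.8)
\medskip
\noindent\textbf{Proof proposal.}
The plan is to compute directly with the permutations $\overline{f}$ and to reduce the homomorphism property of $f \mapsto \overline{f}$ to the multiplicativity of the chosen transversal $T_H$.

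First note that, once the elements $t_h$ are fixed, $f \mapsto \overline{f}$ is always a well-defined injection of $F$ into $\sym(Y)$. Indeed, on each block $Y_{X'}$ the map $y \mapsto y\overline{f}$ is the composite of a right translation of $Y_{X'}$ with one of the bijections $\phi_{X'X''}$, and $f$ permutes the blocks of $\Sigma$, so $\overline{f}$ is a permutation of $Y$; and if $\overline{f} = \overline{g}$, then $f$ and $g$ induce the same permutation of $\Sigma$, and comparing the two defining formulas on $Y_X$ forces $t_h = t_{h'}$, where $f = h\, f_{Xf}$ and $g = h'\, f_{Xg}$, so that $h = \theta_X(t_h) = \theta_X(t_{h'}) = h'$ and $f = g$. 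Hence (i) is equivalent to the single statement that $f \mapsto \overline{f}$ is a homomorphism, i.e.\ that $\overline{fg} = \overline{f}\,\overline{g}$ for all $f, g \in F$.

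The core step is to evaluate both sides of this identity on an arbitrary block $Y_{X'}$. Fix $f, g \in F$, put $X'' = X'f$ and $X''' = X''g$, and let $h_1, h_2 \in H$ be determined by $f_{X'}f = h_1 f_{X''}$ and $f_{X''}g = h_2 f_{X'''}$, so that $f_{X'}(fg) = h_1 h_2 f_{X'''}$. Using the relations $\psi_{X'X''} = \psi_{XX''} \circ \psi_{XX'}^{-1}$, $\phi_{X'X''} = \phi_{XX''} \circ \phi_{XX'}^{-1}$ (so $\phi_{X'X'} = \mathrm{id}$), together with the identity $\phi_{X'X''}(y')\psi_{X'X''}(m') = \phi_{X'X''}(y'm')$, one checks, distinguishing the cases according to which of $X', X'', X'''$ coincide, that for all $y \in Y_{X'}$,
\[ y\,\overline{fg} = \phi_{X'X'''}( y\,\psi_{XX'}(t_{h_1 h_2}) ) \qquad\text{while}\qquad y\,(\overline{f}\,\overline{g}) = \phi_{X'X'''}( y\,\psi_{XX'}(t_{h_1} t_{h_2}) ). \]
Since $\phi_{X'X'''}$ and $\psi_{XX'}$ are bijective, $\overline{fg}$ and $\overline{f}\,\overline{g}$ agree on $Y_{X'}$ if and only if $t_{h_1} t_{h_2} = t_{h_1 h_2}$. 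Every pair $(h_1, h_2) \in H \times H$ arises this way: given $h_1, h_2 \in H$, choose blocks $X', X'', X'''$ (necessarily all equal to $X$ when $|\Sigma| = 1$) and put $f = f_{X'}^{-1} h_1 f_{X''}$, $g = f_{X''}^{-1} h_2 f_{X'''}$. Therefore (i) holds if and only if $t_{h_1} t_{h_2} = t_{h_1 h_2}$ for all $h_1, h_2 \in H$, i.e.\ if and only if $h \mapsto t_h$ is a homomorphism $H \to M_X$. The latter is equivalent to (ii): if $h \mapsto t_h$ is a homomorphism, then $T_H$ is its image and so a subgroup; conversely, if $T_H$ is a subgroup, then $t_{h_1} t_{h_2} = t_{h'}$ for some $h' \in H$, and applying $\theta_X$ gives $h' = h_1 h_2$, hence $t_{h_1} t_{h_2} = t_{h_1 h_2}$.

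Finally, (ii) $\Leftrightarrow$ (iii) is routine group theory. The subgroup $K_X = \ker \theta_X$ is normal in $M_X$; by construction $T_H$ is a transversal of $K_X$ in $M_X$ containing $1_{M_X}$, $\theta_X$ restricts to a bijection $T_H \to H$, and $K_X \cap T_H = \{1_{M_X}\}$ (an element $t_h \in K_X$ has $h = \theta_X(t_h) = 1_F$, so $t_h = t_{1_F} = 1_{M_X}$). Thus a subgroup $T_H$ is exactly a complement to $K_X$, giving $M_X = K_X \rtimes T_H$, and then $\theta_X|_{T_H}$ is a bijective homomorphism identifying $T_H$ with $H$, so $M_X \simeq K_X \rtimes H$; conversely (iii) already asserts that $T_H$ is a complement, hence a subgroup. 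The only genuinely laborious part of this argument is the case analysis in the computation above, and I expect that bookkeeping --- applying the compatibility relations in the right order across the various block-coincidence cases --- to be the main obstacle; everything else is formal.
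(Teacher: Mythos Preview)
Your proposal is correct and follows essentially the same route as the paper: both compute $\overline{f_1}\,\overline{f_2}$ and $\overline{f_1 f_2}$ on an arbitrary block $Y_{X'}$, reduce the homomorphism property to the identity $t_{h_1}t_{h_2}=t_{h_1h_2}$ for all $h_1,h_2\in H$, and then observe that this is equivalent to $T_H$ being a subgroup and hence to the splitting $M_X=K_X\rtimes T_H\simeq K_X\rtimes H$. Your write-up is somewhat more explicit than the paper's (you spell out injectivity, the surjectivity onto pairs $(h_1,h_2)$, and the (ii)$\Leftrightarrow$(iii) equivalence), but the underlying argument is the same.
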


\begin{proof}
 For $f_1,f_2 \in F$, if $y' \in Y_{X'} \subset Y$ is equal to $y \overline{f_{X'}}$ for some $y \in Y_X$, then there exist $h', h'' \in H$ and $f_{X''}, f_{X'''} \in \mathcal{S}$ such that
$$\begin{array}{rcl}
y' \cdot \overline{f_1} \cdot \overline{f_2} &=& (y t_{h'}) \overline{f_{X''}} \cdot \overline{f_2} \mbox{ (if $f_{X'} f_1 = h' f_{X''}$)} \\
&=& (y t_{h'} t_{h''}) \overline{f_{X'''}} \mbox{ ( if $f_{X''} f_2 = h'' f_{X'''}$).}
\end{array}$$
On the other hand,
$$\begin{array}{rcl}
y' \cdot \overline{f_1f_2} = y t_{h'h''} \overline{f_{X'''}}, \mbox{ since $f_{X'}f_1f_2 = h'h''f_{X'''}$.}
\end{array}$$
From this one can see that $\overline{f_1}\, \overline{f_2} = \overline{f_1f_2}$ for all $f_1, f_2 \in F$ if and only if $t_{h'} t_{h"} t_{h'h"}^{-1} = 1$ for all $h', h'' \in H$.  The latter is equivalent to $T_H$ being a group isomorphic to $H$, and this is equivalent to $M_X$ being a split extension of $K_X$ by $T_H$.
\end{proof}

In the notation of the above proof, we can see that the obstruction to $\overline{F}$ being a group in general is:
$$ y' \cdot \overline{f_1}\, \overline{f_2} (\overline{f_1f_2})^{-1} = y' (\overline{f_{X'}})^{-1} \, \overline{h'} \,\overline{h''}  \, (\overline{h'h''})^{-1} \, \overline{f_{X'}}. $$
Since the map used on the right lifts right multiplication by the identity from $F \rightarrow F$, it lies in $K$.  As $K$  is normalized by $\overline{F}$, $K\overline{F}$ is a subgroup of $Sym(Y)$.  This subgroup is precisely the
{\it generalized wreath product} $F \circ_{\lambda} \{ M_{X'} \}_{X' \in \Sigma}$, which was defined in \cite{Ba} to be the subgroup of $Sym(Y)$ generated by $K$ and $\overline{F}$ in the situation where $F$ is not assumed to act regularly.

\begin{proposition}\label{trans}
\nmrt
\tm{i} For all $f \in F$, for all $X' \in \Sigma$, there exists an $X'' \in \Sigma$ such that $Y_{X'} \bar{f} = Y_{X''}$.
\tm{ii} $F \circ_{\lambda} \{M_{X'}\}_{X' \in \Sigma}$ acts transitively on $Y$.
\enmrt
\end{proposition}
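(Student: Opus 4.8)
The plan is to prove (i) first, since it is essentially a bookkeeping statement about how $\bar f$ moves the blocks $Y_{X'}$, and then deduce (ii) from (i) together with the fact that each $M_{X'}$ acts transitively on $Y_{X'}$ (being $M_{X'}$ acting on itself by right multiplication). For (i), I would fix $f \in F$ and $X' \in \Sigma$, and use the defining case split for $\bar f$ on $Y_{X'}$: either $X'f = X'$, in which case $f_{X'}f = h f_{X'}$ for some $h \in H$ and by definition $y\bar f = y\,\psi_{XX'}(t_h)$ for $y \in Y_{X'}$, so $Y_{X'}\bar f = Y_{X'}\psi_{XX'}(t_h) = Y_{X'}$ (take $X'' = X'$); or $X'f = X'' \neq X'$, in which case $f_{X'}f = h f_{X''}$ and $y\bar f = \phi_{X'X''}(y\,\psi_{XX'}(t_h))$, whence $Y_{X'}\bar f = \phi_{X'X''}(Y_{X'}\psi_{XX'}(t_h)) = \phi_{X'X''}(Y_{X'}) = Y_{X''}$ since $\phi_{X'X''}$ is a bijection $Y_{X'} \to Y_{X''}$. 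In either case the image is exactly one block $Y_{X''}$, and in fact $X'' $ is the block $X'f$, so this also records that $\bar f$ permutes $\{Y_{X'}\}_{X' \in \Sigma}$ compatibly with how $f$ permutes $\Sigma$.

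For (ii), note first that the elements of $K$ preserve each $Y_{X'}$ setwise, while by (i) every $\bar f$ maps each $Y_{X'}$ onto some block $Y_{X''}$; hence every element of $K\overline{F} = F \circ_{\lambda}\{M_{X'}\}_{X' \in \Sigma}$ permutes the set of blocks $\{Y_{X'}\}_{X' \in \Sigma}$, so $\Sigma' := \{Y_{X'}\}$ is a block system for the group. To get transitivity on $Y$ it therefore suffices to show two things: that the group is transitive on the blocks $\Sigma'$, and that the stabilizer of a block $Y_{X'}$ is transitive on $Y_{X'}$. The first holds because $F$ acts transitively on $\Sigma$ (the blocks $X' = Xf_{X'}$ form a single orbit under right multiplication, as $F$ is transitive on itself), and $\bar f_{X'}$ sends $Y_X$ to $Y_{X'}$ by (i); so the $\bar f_{X'}$ already move $Y_X$ to every block. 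For the second, fix $y_0, y_1 \in Y_{X'}$; since $M_{X'}$ acts on $Y_{X'} = M_{X'}$ by right multiplication it is (sharply) transitive there, so there is $m' \in M_{X'}$ with $y_0 m' = y_1$. It remains to exhibit $m'$, viewed inside $\operatorname{Sym}(Y)$ via the generalized wreath product's action, as an element of $K\overline{F}$ fixing the block $Y_{X'}$. Writing $\theta_{X'}(m') = f' \in F^{X'}$, we have $m' = k' t'$ for a suitable $k' \in K_{X'}$ and $t'$ a chosen coset representative lying over $f'$; then $k' \in K \subseteq K\overline F$ acts on $Y_{X'}$ by right multiplication by $k'$ and fixes every other block, while $\overline{f'}$ acts on $Y_{X'}$ (since $X'f' = X'$) by right multiplication by $\psi_{XX'}(t_h)$ for the appropriate $h$, and one checks the composite $k'\,\overline{f'}$ restricted to $Y_{X'}$ is right multiplication by an element of $M_{X'}$ whose $\theta_{X'}$-image is $f'$; adjusting $k'$ within $K_{X'}$ (using that $K$ is a full direct factor, so we may modify the $X'$-coordinate freely) makes this composite equal to right multiplication by $m'$ on $Y_{X'}$. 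Hence $y_0$ and $y_1$ lie in the same orbit, proving block-stabilizer transitivity and thus (ii).

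The main obstacle is the bookkeeping in the last step of (ii): the permutations $\bar f$ are only defined up to composition by elements of $K$ (as the excerpt stresses), so to realize a prescribed $m' \in M_{X'}$ exactly as the restriction to $Y_{X'}$ of an element of $K\overline F$, I need to be careful that the freedom in choosing coset representatives $t_h \in M_X$ and the freedom in the $K_{X'}$-coordinate together suffice to hit $m'$ on the nose. Concretely, the subtlety is that $\overline{f'}$ restricted to $Y_{X'}$ is right multiplication by $\psi_{XX'}(t_h)$ — an element lying over $f'$ under $\theta_{X'}$ — so it differs from $m'$ by an element of $K_{X'}$, and I must invoke that $K = \prod_{X' \in \Sigma} K_{X'}$ contains that correction in its $X'$-coordinate with the other coordinates trivial, so that premultiplying by the right element of $K$ fixes all other blocks and adjusts only $Y_{X'}$. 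Once this is set up cleanly the rest is routine; alternatively one can bypass it by just observing that $\theta_{X'}: M_{X'} \to F^{X'}$ is onto with kernel $K_{X'} \le K$, and that the restriction-to-$Y_{X'}$ map from the stabilizer of $Y_{X'}$ in $K\overline F$ onto $\operatorname{Sym}(Y_{X'})$ has image containing both all of $K_{X'}$ and (via $\overline{F^{X'}}$) a transversal of $K_{X'}$ in $M_{X'}$, hence all of $M_{X'}$, which is transitive on $Y_{X'}$.
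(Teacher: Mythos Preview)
Your proof of (i) is correct and essentially identical to the paper's, just phrased via the explicit case split $X'f = X'$ versus $X'f \ne X'$ rather than the paper's unified computation through $Y_{X'} = Y_X\overline{f_{X'}}$.

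For (ii) your argument is correct but takes a genuinely different route. The paper fixes a base point $y$ in the distinguished block $Y_X$ and, for an arbitrary $y' \in Y_{X'}$, directly writes down a single element $\hat{\ell}\,\overline{f}$ (with $\ell \in K_X$ and $f = hf_{X'}$) sending $y$ to $y'$; this is a two-line computation using $M_X = K_X T_H$ and $\phi_{XX'}(y\ell t_h) = y\hat{\ell}\,\overline{hf_{X'}}$. Your approach instead establishes a block system and reduces to transitivity on blocks plus transitivity of the block stabilizer, which requires the extra bookkeeping you flag: matching a prescribed $m' \in M_{X'}$ to the restriction of some $k\,\overline{f'}$ by correcting in the $K_{X'}$-coordinate. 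That works, but is heavier than needed. A further difference worth noting: the paper's direct argument actually shows the smaller subset $J\overline{F}$ (with $J$ the diagonal copy of $K_X$) already acts transitively, a fact invoked in the next theorem on regularity; your argument uses the full $K = \prod_{X'} K_{X'}$ and does not isolate this stronger conclusion, though it suffices for the proposition as stated.
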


\begin{proof}
Let $f \in F$.  For all $y' \in Y_{X'}$, if $y' = \phi_{XX'}(y)$ then $y' = y \overline{f_{X'}}$, where $f_{X'}$ is the unique element of $\mathcal{S}$ for which $X' = H f_{X'}$.  So $Y_{X'}= Y_X \overline{f_{X'}}$.  If $f = f_{X'}^{-1} h f_{X''}$ for some $h \in H$ and $f_{X''} \in \mathcal{S}$, then using our definitions we get $Y_{X'} \bar{f} = Y_X \bar{h} \, \overline{f_{X''}}$.  Since $y \bar{h} = y t_h$ for all $y \in Y_X$, we have $Y_X \bar{h} = Y_X$, so $Y_{X'} \bar{f} = Y_X \overline{f_{X''}} = \phi_{XX''}(Y_X) = Y_{X''}$.  So $(i)$ holds.

Let $y \in Y_X$.  It suffices to show that for all $X' \in \Sigma$ and $y' \in Y_{X'}$, there is a $k \in K$ and $\overline{f} \in \overline{F}$ such that $y k \overline{f} = y'$.  Let $y \in Y_X$ and $y' \in Y_{X'}$ for some $X' \in \Sigma$.  Since $M_X$ acts transitively on $Y_X$ and $\phi_{XX'}$ is a bijection, we can find an $\ell \in K_X$ and $h \in H$ such that $\phi_{XX'}(y \ell t_h) = y'$.  Since $\ell \in K_X$, $\hat{\ell} \in J \subset K$. Letting $f = h f_{X'}$, this means $y \cdot \hat{\ell} \overline{f} = y'$.  This implies $\langle J, \overline{F} \rangle$ is a subgroup of $F \circ_{\lambda} \{M_{X'}\}_{X' \in \Sigma}$ that acts transitively on the set $Y$.  In particular $F \circ_{\lambda} \{M_{X'}\}_{X' \in \Sigma}$ acts transitively on $Y$, this proves $(ii)$.
\end{proof}

We are interested in cases where the elements used for $\bar{F}$ can be chosen so that the subgroup $\langle J, \overline{F} \rangle$ of $\langle K, \overline{F} \rangle$ acts regularly on $Y$.  In order to do so, it must be as small as possible.  The next result gives a sufficient condition for $|\langle J, \overline{F} \rangle| = |K_X||F|=|Y|$.

\begin{theorem}\label{reggwps}
Let $J, \overline{F} \subset F \circ_{\lambda} \{M_{X'}\}_{X \in \Sigma} \le Sym(Y)$ be as above.
If $F = H C_F(H)$ and $M_X = K_X C_{M_X}(K_X)$, and either $F = H \times F_1$ for some subgroup $F_1$ of $F$ or $M_X \simeq K_X \times H$ then $J\overline{F} = \overline{F}J$ is a subgroup of $Sym(Y)$ of order $|J||F|$ that acts regularly on $Y$.
\end{theorem}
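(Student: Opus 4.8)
The plan is to make deliberate choices of the two transversals used to build $\overline F$ --- the set $\mathcal S = \{f_{X'}\}$ of coset representatives for $H$ in $F$, and the set $T_H = \{t_h\}$ of representatives for $K_X$ in $M_X$ --- so that (a) $\overline F$ centralizes the diagonal subgroup $J$, and (b) the obstruction cocycle $\overline{f_1}\,\overline{f_2}\,(\overline{f_1f_2})^{-1}$ identified right after the preceding Lemma always lies in $J$. Granting (a) and (b), $J\overline F = \overline F J$ is closed under products and inverses and contains the identity, hence is a subgroup of $\sym(Y)$; its order is then computed directly, and Proposition~\ref{trans} upgrades transitivity to regularity.

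For (a) I would first use $M_X = K_X C_{M_X}(K_X)$ to arrange $T_H \subseteq C_{M_X}(K_X)$: since $K_X = \ker\theta_X$, the hypothesis forces $\theta_X$ to carry $C_{M_X}(K_X)$ onto $H$, so each $t_h$ may be chosen in $C_{M_X}(K_X)$ (with $t_{1_F} = 1_{M_X}$). With this choice I would verify, by evaluating $y\,\hat\ell\,\overline f$ and $y\,\overline f\,\hat\ell$ on an arbitrary block $Y_{X'}$, that $\hat\ell\,\overline f = \overline f\,\hat\ell$ for every $\ell \in K_X$ and $f \in F$. Here one must track that $\overline f$ carries $Y_{X'}$ to $Y_{X'f}$ by a composite of a right translation by $\psi_{XX'}(t_h)$ and a transition bijection $\phi$; using that $\psi_{XX'}$ is a group isomorphism and the intertwining identities $\phi_{X'X''}(zm') = \phi_{X'X''}(z)\psi_{X'X''}(m')$ and $\psi_{X'X''}\circ\psi_{XX'} = \psi_{XX''}$, the desired commutation reduces to the single identity $\ell t_h = t_h \ell$, which holds by the choice of $t_h$. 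Thus $\overline F$ centralizes $J$, so in particular normalizes it and $J\overline F = \overline F J$. Along the way I would record the two elementary facts $|\overline F| = |F|$ (the map $f \mapsto \overline f$ is injective, as one sees from its restriction to $Y_X$) and $J \cap \overline F = 1$ (an element of the intersection fixes every block setwise, which forces the underlying $f$ to act trivially on $Y_X$, hence to be $1_F$).

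For (b) I would split according to the two alternatives, keeping the choice $T_H \subseteq C_{M_X}(K_X)$ from (a). If $M_X \simeq K_X \times H$, take $T_H$ to be a complement of $K_X$ in $M_X$; it automatically centralizes $K_X$, so this is compatible with (a), and since $T_H$ is now a group the preceding Lemma gives $\overline{f_1}\,\overline{f_2} = \overline{f_1f_2}$, so the obstruction is trivial. If instead $F = H \times F_1$, then $F_1$ is a subgroup transversal for $H$ in $F$, and I would take $\mathcal S = F_1$. Writing $f_1 = a_1 b_1$ and $f_2 = a_2 b_2$ with $a_i \in H$, $b_i \in F_1$, one computes $f_{X'}f_1 = a_1(f_{X'}b_1)$ with $f_{X'}b_1 \in F_1 = \mathcal S$, so the element $h' \in H$ produced when $\overline{f_1}$ acts on $Y_{X'}$ is $a_1$ regardless of $X'$, and similarly the $h''$ coming from $\overline{f_2}$ is $a_2$ regardless of the block, while $h'h'' = a_1a_2$; hence the obstruction restricted to each block $Y_{X'}$ is right translation by the same element $t_{a_1}t_{a_2}t_{a_1a_2}^{-1} \in K_X$, so the obstruction equals $\widehat{t_{a_1}t_{a_2}t_{a_1a_2}^{-1}} \in J$. (The hypothesis $F = H\times F_1$ forces $F = HC_F(H)$, and its role here is exactly to supply this subgroup transversal $\mathcal S$ landing in $C_F(H)$.) In either case $\overline F\,\overline F \subseteq J\overline F$.

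Finally I would assemble the pieces. By (a) and (b), $J\overline F = \overline F J$ is a subgroup of $\sym(Y)$, and using $J \cap \overline F = 1$ and $|\overline F| = |F|$ its order is $|J||F| = |K_X||F|$. The proof of Proposition~\ref{trans} already shows that $\langle J, \overline F\rangle = J\overline F$ acts transitively on $Y$; since $|Y| = [F:H]\,|M_X| = [F:H]\,|K_X|\,|H| = |K_X||F|$ coincides with $|J\overline F|$, the transitive action is regular. I expect the main obstacle to be the bookkeeping in step (a): confirming that, despite the fact that $\overline f$ both permutes the blocks $Y_{X'}$ and twists by the various $\phi$'s and $\psi$'s, conjugating the diagonal element $\hat\ell$ by $\overline f$ returns exactly $\hat\ell$; the block-independence of $h'$ in the second case of (b) is a similar but milder point.
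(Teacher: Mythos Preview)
Your proposal is correct and follows essentially the same two-step strategy as the paper: show that $\overline{F}$ normalizes (in fact centralizes) $J$ via the choice $T_H\subseteq C_{M_X}(K_X)$, and show that the obstruction $\overline{f_1}\,\overline{f_2}\,(\overline{f_1f_2})^{-1}$ lands in $J$ by taking $T_H$ to be a complement when $M_X\simeq K_X\times H$ or $\mathcal S=F_1$ when $F=H\times F_1$. Your write-up is slightly tighter in two places---you observe that the centralizing step needs only $T_H\subseteq C_{M_X}(K_X)$ (not $\mathcal S\subseteq C_F(H)$), and you spell out the injectivity of $f\mapsto\overline f$ and the count $|J\overline F|=|K_X||F|=|Y|$ that converts transitivity into regularity---but the architecture is the same.
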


\begin{proof}
Assume suitable choices have been made defining the elements of $\bar{F}$.  From our observation preceding the statement of the theorem, it suffices to show $(i)$ that $(\overline{F})^2 \subseteq J\overline{F}$, and $(ii)$ that the set $\overline{F}$ normalizes $J$, as this will show $J\overline{F}=\overline{F}J$ will be a group of the correct order.

\begin{enumerate}
\tm{i}  First suppose $M_X \simeq K_X \times H$.  Since this implies the elements of $T_H$ can be chosen to form a group, it follows that the elements of $\overline{F}$ can be chosen to form a group, and so (i) will hold.  So assume $F=H \times F_1$ for some subgroup $F_1 \subseteq C_F(H)$, $M_X = K_X C_{M_X}(K_X)$, and that $M_X$ is not isomorphic to $K_X \times H$.   In this case $\overline{F}$ may not be a group, so recall the calculation of the obstruction.   Given $f_1, f_2 \in F$, then for $y' = y \overline{f_{X'}} \in Y_{X'} \subset Y$ with $y \in Y_X$,
$$ y' \overline{f_1} \, \overline{f_2} \, (\overline{f_1f_2})^{-1} = y' (\overline{f_{X'}})^{-1} \, \overline{h'} \, \overline{h''} \, (\overline{h'h''})^{-1} \overline{f_{X'}}, $$
where $f_{X'}f_1 = h'f_{X''}$ and $f_{X''} f_2 = h'' f_{X'''}$ for $h', h'' \in H$ and $f_{X''}, f_{X'''} \in \mathcal{S}$.
In order for the map on the right to lie in $J$, it must be equal to an $\hat{\ell}$ for some $\ell \in K_X$, and this means it cannot depend on the choice of $X'$.  However, the choice of $h'$ and subsequently $h''$ will depend on the choice of $X'$, unless we are in the case where the choices of $h'$ and $h''$ are not affected by $f_{X'}$.  This requires $\mathcal{S}$ to be a subgroup of $F$ that centralizes $H$, since then
$f_{X'} f_1 f_2 = h' f_{X''} f_2 = h' h'' f_{X'''}$ and the elements $h'$ and $h''$ that appear do not depend on $f_{X'}$.  So in the case $M_X = K_X C_{M_X}(K_X)$ and $F=H \times F_1$ for some subgroup $F_1 \subseteq C_F(H)$, (i) will hold when we choose $\mathcal{S}=F_1$.

\tm{ii} When $\mathcal{S}$ centralizes $H$, then for all $h \in H$ and all $y' \in Y_{X'}$ with $y' = y \overline{f_{X'}}$ for some $y \in Y_X$, we have $f_{X'}h = hf_{X'}$, so
$$ y' \cdot \overline{h} = y \overline{f_{X'}} \, \overline{h} = y \overline{h} \, \overline{f_{X'}} $$
$$ = (y t_h) \overline{f_{X'}} = y \overline{f_{X'}} \psi_{XX'}(t_h) = y' \psi_{XX'}(t_h). $$
(When $f_{X'}$ and $h$ do not commute, $y' \overline{h}$ will not be equal to $y' \psi_{XX'}(t_h)$.)

Similarly, the assumption $M_X=K_X C_{M_X}(K_X)$ implies that we can choose the set of coset representatives $T_H$ for $K_X$ in $M_X$ with $t_h \in C_{M_X}(K_X)$ for all $h \in H$.

Let $k \in K_X$, and let $\hat{k} = \prod_{X'} \psi_{XX'}(k)$ be the corresponding element of $J$.   Let $f \in F$, with $f = h f_{X''}$ for $h \in H$ and $f_{X''} \in \mathcal{S}$.  Let $y' \in Y$ and assume $y' = \phi_{XX'}(y) \in Y_{X'}$ for some $y \in Y_X$.   Let $h' \in H$ and $f_{X'''} \in \mathcal{S}$ be such that $f_{X'}f = f_{X'} h f_{X''} = h h' f_{X'''}$.  Then
$$\begin{array}{rcl}
y' \cdot \hat{k} \cdot \overline{f} &=& \phi_{XX'}(y) \hat{k} \cdot \overline{f} \\
&=& \phi_{XX'}(y) \psi_{XX'}(k) \overline{f} \\
&=& \phi_{XX'}(y k) \overline{f} \\
&=& (yk) \overline{f_{X'}} \, \overline{f} \\
&=& (yk) \overline{h h'} \, \overline{f_{X'''}} \quad \mbox{ (since $f_{X'} f = hh'f_{X'''}$)} \\
&=& (y k t_{hh'}) \overline{f_{X'''}} \\
&=& \phi_{XX'''}(ykt_{hh'}) \\
&=& \phi_{XX'''}(y t_{hh'} k) \quad \mbox{ (since $t_{hh'}$ commutes with $k$)} \\
&=& \phi_{XX'''}(y t_{hh'}) \psi_{XX'''}(k) \\
&=& (y t_{hh'}) \overline{f_{X'''}} \hat{k} \\
&=& y \overline{hh'} \, \overline{f_{X'''}} \hat{k} \\
&=& y' \cdot \overline{f} \hat{k}.
\end{array}$$
Therefore, when $\mathcal{S} \subseteq C_F(H)$ and $T_H \subseteq C_{M_X}(K_X)$, $\overline{F}$ normalizes $J$.  This implies $J \overline{F} = \overline{F}J$ as subsets of $Sym(Y)$, so (ii) holds.  The proof of the theorem is complete.
\end{enumerate}
\end{proof}

\section{The generalized $X$-join of Cayley graphs}

Recall the definition of a Cayley graph of a finite group with respect to a symmetric set.

\begin{definition}
Let $A$ be a finite group and let $S$ be a subset of $A$ that is closed under inverses and does not contain the identity. Then the Cayley graph $Cay(A,S)$ is the graph with vertex set $A$ and edge set $E(Cay(A,S))=\{(a,b):ab^{-1}\in S\}.$
\end{definition}

Let $G = Cay(A,S_A)$ be a Cayley graph, with vertices identified with elements of $A$ and $(a,b) \in E(G) \iff ab^{-1} \in S_A$.   Then $A$ acts regularly on $G$, with right multiplications by elements of $A$ correspond to graph automorphisms of $G$.   It is well known that a Cayley graph is vertex-transitive, but the converse need not be true. A graph $G$ is isomorphic to a Cayley graph $Cay(A,S)$ if and only if $\aut(G)$ contains a subgroup $A_0$ that is regular as a permutation group on $V(G).$ In this case $A=A_0$. See \cite[section 10]{hand}.

\begin{proposition}\label{subgp}
Let $G=Cay(A,S_A)$ be a Cayley graph and suppose $\Sigma$ is a system of blocks for the action of $A$ by right multiplication on itself.  For a fixed $X_1 \in \Sigma$, let $H = \{ h \in A : X_1h = X_1 \}$ and let $\{a_X : X \in \Sigma \}$ be a set of right coset representatives for $H$ in $A$.
Suppose that $\{B_X=Cay(C_X,S_X)\}_{X \in \Sigma }$ is a set of pairwise isomorphic Cayley graphs, and assume $\theta_{X_1}: C_{X_1} \rightarrow H$ is a group epimorphism whose values agree with those of a graph epimorphism $\lambda_{X_1}: B_{X_1} \rightarrow G[X_1]$.  Let $Y = \bigcup_{X \in \Sigma} Y_X$, where $Y_X = Y_{X_1} \overline{a_{X}}$ is a copy of $B_{X_1}$ for which $(y \mapsto y \overline{a_X})$, $y \in Y_{X_1}$, is a graph isomorphism.  For all $X \in \Sigma$, let $\lambda_X: Y_X \rightarrow Ha_X$ be the graph epimorphism given by $\lambda_X(y\overline{a_X}) = \lambda_{X_1}(y) a_X$, for all $y \in Y_{X_1}$.  Let $\lambda = \{ \lambda_X \}_{X \in \Sigma}$ be this family of graph epimorphisms.
Then the generalized wreath product $A \circ_{\lambda} \{C_X \}_{X \in \Sigma}$ is a subgroup of the automorphism group of the generalized $X$-join graph $W = G \circ_{\lambda} \{B_X \}_{X \in \Sigma}$.
\end{proposition}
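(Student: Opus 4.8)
The plan is to use the description of the generalized wreath product recalled in Section 2.2: $A \circ_{\lambda} \{C_X\}_{X\in\Sigma}$ is the subgroup of $\sym(Y)$ generated by $K = \prod_{X\in\Sigma} K_X$, where $K_X = \ker\theta_X$, together with $\overline{A} = \{\overline{a}:a\in A\}$. Thus it suffices to show that every $k\in K$ and every $\overline{f}$ with $f\in A$ is a graph automorphism of $W$; since $W$ is finite, for this it is enough to check that each of these vertex permutations sends every edge of $W$ to an edge of $W$. Write $\lambda:Y\to A$ for the map whose restriction to each $Y_X$ is $\lambda_X$, so $\lambda(y)\in X=Ha_X$ for $y\in Y_X$, and recall that $W$ makes $y\in Y_X$ adjacent to $y'\in Y_{X'}$ exactly when either $X=X'$ and $(y,y')$ is an edge of the copy $Y_X$ of $B_{X_1}$, or $X\ne X'$ and $(\lambda(y),\lambda(y'))\in E(G)$. (One should also note in passing that each $\lambda_X$ really is a graph epimorphism onto $G[X]$, which follows from the hypothesis that $\lambda_{X_1}=\theta_{X_1}$ is one together with the relation $\theta_X\circ\psi_{X_1X}=\mu_{X_1X}\circ\theta_{X_1}$; this is what makes $W=G\circ_\lambda\{B_X\}_{X\in\Sigma}$ a legitimate generalized $X$-join in the first place.)

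The crux is the equivariance identity
$$\lambda(y\,\overline{f}) \;=\; \lambda(y)\,f \qquad\text{for all } f\in A,\ y\in Y,$$
which says that $\lambda$ intertwines the action of $\overline{A}$ on $Y$ with the regular action of $A$ on itself. I would prove it by fixing $y\in Y_{X'}$, writing $a_{X'}f=h\,a_{X''}$ with $h\in H$ and $a_{X''}\in\mathcal{S}$, unwinding the two cases ($X''=X'$ and $X''\ne X'$) in the definition of $\overline{f}$ from Section 2.2, and simplifying using $\lambda_{X'}(y'm')=\lambda_{X'}(y')\theta_{X'}(m')$, $\theta_{X'}\circ\psi_{X_1X'}=\mu_{X_1X'}\circ\theta_{X_1}$, $\lambda_{X''}\circ\phi_{X'X''}=\varepsilon_{X'X''}\circ\lambda_{X'}$, together with the facts that $\mu_{X_1X'}$ is conjugation by $a_{X'}$ while $\varepsilon_{X'X''}$ is right multiplication by $a_{X'}^{-1}a_{X''}$: writing $\lambda(y)=g\,a_{X'}$ with $g\in H$, both sides reduce to $gh\,a_{X''}$. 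This bookkeeping with the internal data of the generalized wreath product is the step I expect to be the main obstacle — not conceptually deep, but the one place where care is genuinely needed.

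Granting the identity, the two verifications are short. For $\overline{f}$: by Proposition~\ref{trans}(i) it maps each block $Y_{X'}$ bijectively onto $Y_{X'f}$, so it carries distinct-block pairs to distinct-block pairs; on such a pair adjacency is preserved because $(\lambda(y),\lambda(y'))\in E(G)\iff(\lambda(y)f,\lambda(y')f)\in E(G)$ (right multiplication by $f$ being an automorphism of $Cay(A,S_A)$) combined with $\lambda(y\overline{f})=\lambda(y)f$. On a within-block pair $y,y'\in Y_{X'}$, unwinding $\overline{f}$ shows that, after identifying $Y_{X'}$ and $Y_{X'f}$ with $B_{X_1}$ through $\phi_{X_1X'}$ and $\phi_{X_1X''}$, the restriction of $\overline{f}$ to $Y_{X'}$ is right multiplication by $t_h\in C_{X_1}$, which is an automorphism of the Cayley graph $B_{X_1}=Cay(C_{X_1},S_{X_1})$; hence within-block edges are preserved as well. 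For $k=\prod_X k_X\in K$: since $k$ fixes each block setwise, on $Y_X$ it acts, under the same identification, as right multiplication by $\psi_{X_1X}^{-1}(k_X)\in K_{X_1}\le C_{X_1}$, again an automorphism of $B_{X_1}$, so within-block edges are preserved; and on a between-block pair, $\lambda(yk)=\lambda_X(y)\theta_X(k_X)=\lambda_X(y)$ because $k_X\in\ker\theta_X$, so that adjacency is preserved too. Since $\overline{A}$ and $K$ therefore consist of automorphisms of $W$ and generate $A\circ_{\lambda}\{C_X\}_{X\in\Sigma}$, this whole group is contained in $\aut(W)$, which is the assertion.
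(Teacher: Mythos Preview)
Your proposal is correct and follows essentially the same approach as the paper: both show that $K$ and $\overline{A}$ separately consist of automorphisms of $W$ by splitting into the within-block and between-block cases, using that $k_X\in\ker\theta_X$ fixes $\lambda$-values and that the restriction of $\overline{f}$ to a block is (after identification via $\phi_{X_1X'}$) right multiplication by some $t_h\in C_{X_1}$. The only organisational difference is that you isolate the equivariance identity $\lambda(y\overline{f})=\lambda(y)f$ as a lemma, whereas the paper carries out the equivalent computation inline inside its chain of ``iff'' statements; this is a presentational choice rather than a different argument.
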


\begin{proof}
By the above argument we can consider $A\circ_\lambda \{C_X\} _{X\in \Sigma}=\langle K,\overline{A}\rangle$ where $K= \prod_{X' \in \Sigma} {K_{X'}}$. We let ${k_{X'}}\in K_{X'}$ for all $X' \in \Sigma.$
Suppose $(u', v'') \in E(W)$, with $u' \in Y_{X'}$ and $v'' \in Y_{X''}$ for $X', X'' \in \Sigma$.  Let $u,v \in Y_{X_1}$ with $u' = u \overline{a_{X'}}$ and $v'' = v \overline{a_{X''}}$.  Then
$(u' k  , v'' k ) = (u' k_{X'}, v'' k_{X''})$.  If $X' \ne X''$, then
$$\begin{array}{rcl}
(u',v'') \in E(W) &\iff& (\lambda_{X'}(u'),\lambda_{X''}(v'')) \in E(G). \\
\end{array}$$
Since $K_{X'} = \ker \theta_{X'}$ for all $X' \in \Sigma$,
$$\begin{array}{rcl}
(\lambda_{X'}(u'k), \lambda_{X''}(v''k )) &=& (\lambda_{X'}(u'k_{X'}), \lambda_{X''}(v''k_{X''})) \\ &=& (\lambda_{X'}(u') \theta_{X'}(k_{X'}), \lambda_{X''}(v'')\theta_{X''}(k_{X''})) \\
&=& (\lambda_{X'}(u'), \lambda_{X''}(v'')),
\end{array}$$
so $(u'k , v''k ) \in E(W)$.  If $X' = X''$, then $(u',v'') = (u\overline{a_{X'}},v\overline{a_{X'}}) = (\phi_{{X_1}X'}(u), \phi_{{X_1}X'}(v))$. So
$$\begin{array}{rcl}
(u',v'') \in E(W) &\iff& (u',v'') \in E(B_{X'}) \\
&\iff& (\phi_{{X_1}X'}(u),\phi_{{X_1}X'}(v)) \in E(B_{X'}) \\
&\iff& (u,v) \in E(B_{X_1}) \quad \mbox{ (since $\phi_{{X_1}X'}$ is a graph isomorphism).} \\
\end{array}$$
Now, we have that
$$\begin{array}{rcl}
(u'k ,v''k ) &=& (\phi_{{X_1}X'}(u) k_{X'},\phi_{{X_1}X'}(v) k_{X'}).
\end{array}$$
Since $\phi_{{X_1}X'}$ and right multiplication by $k_{X'} \in C_{X'}$ are graph isomorphisms,
$$\begin{array}{rcl}
(u'k ,v''k ) \in E(W) &\iff& (\phi_{{X_1}X'}(u) k_{X'},\phi_{{X_1}X'}(v) k_{X'})\in E(B_{X'}) \\
&\iff& (\phi_{{X_1}X'}(u),\phi_{{X_1}X'}(v))\in E(B_{X'}) \\
&\iff& (u,v) \in E(B_{X_1}), \\
\end{array}$$
thus $(u',v'') \in E(W) \iff (u'k ,v''k) \in E(W)$.

Next, let $\overline{a} \in \overline{A}$.  Suppose $(u',v'') \in E(W)$, with $u' \in Y_{X'}$ and $v'' \in Y_{X''}$.  Let $u,v \in Y_{X_1}$ be such that $u' = \phi_{{X_1}X'}(u)$ and $v'' = \phi_{{X_1}X''}(v)$.  If $X' \ne X''$, then
$$\begin{array}{rcl}
(u',v'') \in E(W) &\iff& (\lambda_{X'}(u'),\lambda_{X''}(v'')) \in E(G) \\
&\iff& (\lambda_{X'}(\phi_{{X_1}X'}(u)), \lambda_{X''}(\phi_{{X_1}X''}(v)) \in E(G) \\
&\iff& (\lambda_{{X_1}}(u) a_{X'}, \lambda_{{X_1}}(v) a_{X''}) \in E(G). \\
\end{array}$$
Suppose $a_{X'}a = h_2a_{X_2}$ and $a_{X''}a = h_3 a_{X_3}$ for $h_2, h_3 \in H$ and $a_{X_2},a_{X_3} \in \mathcal{S}$.  Then
$$\begin{array}{rcl}
(u'\overline{a},v''\overline{a}) &=& (u \overline{h_2} \overline{a_{X_2}}, v \overline{h_3} \overline{a_{X_3}}) \\
&=& (\phi_{{X_1}{X_2}}(u t_{h_2}), \phi_{{X_1}{X_3}}(v t_{h_3}) ).
\end{array}$$
As Proposition \ref{trans} (i)  implies $\bar{a}$ will map distinct elements of $\{Y_X : X \in \Sigma\}$ to distinct elements, we must have $X_2 \ne X_3$, and so
$$\begin{array}{rcl}
(u'\overline{a}, v''\overline{a}) \in E(W) &\iff & (\phi_{{X_1}{X_2}}(u t_{h_2}), \phi_{{X_1}{X_3}}(v t_{h_3}) ) \in E(W) \\
&\iff& (\lambda_{X_2}(\phi_{{X_1}{X_2}}(u t_{h_2})), \lambda_{X_3}(\phi_{{X_1}{X_3}}(vt_{h_3}))) \in E(G) \\
&\iff& (\lambda_{{X_1}}(ut_{h_2})a_{X_2}, \lambda_{{X_1}}(vt_{h_3})a_{X_3}) \in E(G) \\
&\iff& (\lambda_{{X_1}}(u) h_2 a_{X_2}, \lambda_{{X_1}}(v) h_3 a_{X_3}) \in E(G) \\
&\iff& (\lambda_{{X_1}}(u) a_{X'} a, \lambda_{{X_1}}(v) a_{X''} a ) \in E(G) \\
&\iff& (\lambda_{{X_1}}(u) a_{X'},\lambda_{{X_1}}(v) a_{X''}) \in E(G) \\
&\iff& (\lambda_{X'}(u\overline{a_{X'}}),\lambda_{X''}(v\overline{a_{X''}})) \in E(G)\\
&\iff& (\lambda_{X'}(u'),\lambda_{X''}(v'')) \in E(G)\\
&\iff& (u',v'') \in E(W).\\
\end{array}$$
Therefore, $(u'\overline{a},v''\overline{a}) \in E(W)$ when $X' \ne X''$.

If $X'=X''$, then continuing with the above notation,
$$\begin{array}{rcl}
(u'\overline{a},v''\overline{a}) &=& (u \overline{h} \overline{a_{X_2}}, v \overline{h} \overline{a_{X_2}}) \\
&=&  (\phi_{{X_1}{X_2}}(ut_h), \phi_{{X_1}{X_2}}(vt_h)),
\end{array}$$
then the fact that $\phi_{{X_1}{X_2}}$ is a graph isomorphism means this is an edge in $W$ if and only if $(ut_h, vt_h) \in E(B_{X_1})$. When $(u',v'') \in E(W)$, we have $(\phi_{{X_1}X'}(u), \phi_{{X_1}X'}(v)) \in E(B_{X'}) \iff (u,v) \in E(B_{X_1}) \iff (ut_h, vt_h) \in E(B_{X_1}).$ So $(u'\overline{a},v''\overline{a}) \in E(W)$.

From the above we can conclude that $K$ and $\overline{A}$ are subsets of $\aut(W)$, so they generate a subgroup of $\aut(W)$.  This proves the theorem.
\end{proof}

\begin{corollary}
Suppose that $G=Cay(A,S_A)$ is a Cayley graph, $\Sigma$ is a system of blocks for the natural action of $A$ on $G$, and $\{B_X=Cay(C_X,S_X)\}_{X \in \Sigma}$ is a set of pairwise isomorphic Cayley graphs.  Let $\theta_{X_1}: C_{X_1} \rightarrow A^{X_1}$ be a group epimorphism agreeing with a graph epimorphism from $B_{X_1}$ to $G[X_1]$ that induces graph epimorphisms $\lambda_X: B_{X} \rightarrow X$ for all $X \in \Sigma$.
Then the graph $G \circ_\lambda \{B_X\}_{X \in \Sigma}$ is a vertex transitive graph.
\end{corollary}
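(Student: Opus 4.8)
The plan is to exhibit inside $\aut(W)$, where $W = G \circ_\lambda \{B_X\}_{X \in \Sigma}$, a subgroup that acts transitively on the vertex set of $W$; since $W$ is vertex transitive as soon as $\aut(W)$ contains such a subgroup, this finishes the proof. The natural candidate is the generalized wreath product $A \circ_\lambda \{C_X\}_{X \in \Sigma}$, and the two facts already established about it are exactly what is needed.

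First I would line up the data of the corollary with the regular-base-group construction of Section 2.2. Since $G = Cay(A,S_A)$, the group $A$ acts regularly on $V(G) = A$ by right multiplication, so Section 2.2 applies with $F = A$. For the fixed block $X_1 \in \Sigma$ the setwise stabilizer $A^{X_1}$, viewed as a permutation group on $X_1$, is identified with the subgroup $H = \{h \in A : X_1 h = X_1\}$ acting on itself by right multiplication via the bijection $x_0 h \mapsto h$; under this identification the given epimorphism $\theta_{X_1} : C_{X_1} \to A^{X_1}$ is exactly an epimorphism of the form $\theta_X : M_X \to H$ with $M_X := C_X$. The graphs $B_X$ being pairwise isomorphic Cayley graphs supplies the group isomorphisms $\psi_{X_1 X}$ and the bijections $\phi_{X_1 X}$ (realized by the maps $y \mapsto y\overline{a_X}$), and the compatibility of $\theta_{X_1}$ with the graph epimorphism $\lambda_{X_1} : B_{X_1} \to G[X_1]$, together with the induced $\lambda_X : B_X \to X$, is precisely the remaining hypothesis of Proposition \ref{subgp}. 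Hence Proposition \ref{subgp} yields $A \circ_\lambda \{C_X\}_{X \in \Sigma} = \langle K, \overline{A}\rangle \le \aut(W)$.

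Next I would invoke Proposition \ref{trans}(ii) with $F = A$ and $M_{X'} = C_{X'}$: the generalized wreath product $A \circ_\lambda \{C_X\}_{X \in \Sigma}$ acts transitively on $Y = \bigcup_{X \in \Sigma} Y_X$, which is the vertex set of $W$. Combining this with the previous paragraph, $\aut(W)$ contains a vertex-transitive subgroup, so $W$ is vertex transitive, as claimed.

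There is no real obstacle here: the corollary is essentially the concatenation of Propositions \ref{subgp} and \ref{trans}(ii). The only point that takes a moment is the translation of notation --- matching $(A, A^{X_1}, \{C_X\}_{X\in\Sigma}, \theta_{X_1}, \lambda)$ with the data $(F, H, \{M_{X'}\}_{X'\in\Sigma}, \theta_X, \lambda)$ of the general construction, and in particular observing that under the hypotheses the groups $C_X$ may be regarded as pairwise isomorphic, so that $A \circ_\lambda \{C_X\}_{X \in \Sigma}$ is defined in the sense of Section 2.2. In contrast to Theorem \ref{reggwps}, we do not need this generalized wreath product to contain a \emph{regular} subgroup; transitivity of the whole group already gives vertex transitivity of $W$.
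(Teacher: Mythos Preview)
Your proposal is correct and follows essentially the same route as the paper: invoke Proposition~\ref{subgp} to place $A \circ_\lambda \{C_X\}_{X \in \Sigma}$ inside $\aut(W)$, then invoke Proposition~\ref{trans}(ii) to conclude this subgroup acts transitively on $V(W)$. Your version simply spells out in more detail the identification of the corollary's data with the Section~2.2 setup.
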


\begin{proof}
By Proposition \ref{subgp}, $A \circ_{\lambda} \{ C_X \}_{X \in \Sigma}$ is a subgroup of $\aut W$.
As we can interpret the natural action of $A$ on $V(G)$ as right multiplication of $A$ on itself, it follows from Proposition \ref{trans} (ii) that $A\circ_\lambda \{C_X\} _{X\in \Sigma}$ is a subgroup of $\aut W$ that acts transitively on $V(W)$.
\end{proof}

\begin{theorem}\label{main}
Let $W = G \circ_{\lambda} \{B_X\}_{X \in \Sigma}$ be a generalized $X$-join of a Cayley graph $G=Cay(A,S_A)$
with a set of pairwise isomorphic Cayley graphs $\{B_X=Cay(C_X,S_X)\}_{X\in \Sigma }$.
Let $H$ be the stabilizer in $A$ of a fixed $X_1 \in \Sigma$, and let $K_{X_1}$ be the kernel of the group epimorphism $C_{X_1} \rightarrow H$.
Suppose $A = H C_A(H)$, $C_{X_1}=K_{X_1} C_{C_{X_1}}(K_{X_1})$, and either $A = H \times A_1$ for some subgroup $A_1$ of $A$, or $C_{X_1} \simeq K_{X_1} \rtimes H$.  Then $W$ is a Cayley graph.
\end{theorem}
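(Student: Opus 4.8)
The plan is to deduce the theorem directly from Proposition \ref{subgp} and Theorem \ref{reggwps}, using the fact recalled above that a graph whose automorphism group contains a regular subgroup is a Cayley graph. First I would apply Theorem \ref{reggwps} with $F = A$ and $M_X = C_X$ for each $X \in \Sigma$: the three hypotheses $A = H C_A(H)$, $C_{X_1} = K_{X_1} C_{C_{X_1}}(K_{X_1})$, and ``$A = H \times A_1$ for some subgroup $A_1$ of $A$, or $C_{X_1} \simeq K_{X_1} \rtimes H$'' are exactly the hypotheses of that theorem. Accordingly I fix the set $\mathcal{S}$ of right coset representatives for $H$ in $A$ and the set $T_H$ of right coset representatives for $K_{X_1}$ in $C_{X_1}$ as in the proof of Theorem \ref{reggwps} (so that either $T_H$ can be taken to be a group, or $\mathcal{S} \subseteq C_A(H)$, and in the latter case $T_H \subseteq C_{C_{X_1}}(K_{X_1})$), and build the generalized wreath product $A \circ_{\lambda} \{C_X\}_{X \in \Sigma} = \langle K, \overline{A} \rangle$ with respect to these choices, taking the family $\lambda = \{\lambda_X\}_{X \in \Sigma}$ of graph epimorphisms as in Proposition \ref{subgp}.

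Next, Proposition \ref{subgp} shows that $A \circ_{\lambda} \{C_X\}_{X \in \Sigma}$ is a subgroup of $\aut(W)$; since the diagonal subgroup $J$ of $K$ and the set $\overline{A}$ both lie in $\langle K, \overline{A}\rangle$, the subset $J\overline{A}$ of $\sym(Y)$ lies in $\aut(W)$. By Theorem \ref{reggwps}, $J\overline{A} = \overline{A}J$ is a subgroup of $\sym(Y)$ of order $|J||A| = |K_{X_1}||A|$ that acts regularly on $Y$. As a sanity check I would verify the count $|Y| = |\Sigma| \cdot |C_{X_1}| = (|A|/|H|)(|K_{X_1}||H|) = |K_{X_1}||A|$, which is consistent with $J\overline{A}$ being regular on $V(W) = Y$. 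The criterion of \cite[Section 10]{hand} then gives $W \cong Cay(J\overline{A}, S)$ for a suitable symmetric connection set $S$, so $W$ is a Cayley graph.

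I expect no serious obstacle here, since the substantive work is already done: Proposition \ref{subgp} places $K$ and $\overline{A}$ inside $\aut(W)$, and Theorem \ref{reggwps} extracts the regular subgroup $J\overline{A}$ from the generalized wreath product. The one point needing a little care is the compatibility of the two inputs: Theorem \ref{reggwps} requires the coset representatives $\mathcal{S}$ and $T_H$ to be chosen in a particular way, whereas Proposition \ref{subgp} is phrased for an arbitrary choice — but its conclusion $K, \overline{A} \subseteq \aut(W)$ holds for every choice of representatives, so one is free to use precisely the representatives demanded by Theorem \ref{reggwps}. A secondary bookkeeping remark is the identification $J \simeq K_{X_1}$, which is immediate from the definition of $J$ as the diagonal subgroup of $K = \prod_{X' \in \Sigma} K_{X'}$.
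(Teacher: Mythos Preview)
Your proposal is correct and follows essentially the same route as the paper: invoke Proposition~\ref{subgp} to place $\langle K,\overline{A}\rangle$ inside $\aut(W)$, then apply Theorem~\ref{reggwps} (with $F=A$, $M_X=C_X$) to extract the regular subgroup $R=J\overline{A}$, and conclude via the standard Cayley-graph criterion. The paper's own proof is just a two-line compression of exactly this argument; your additional remarks on the choice of coset representatives and the cardinality check $|Y|=|K_{X_1}||A|$ are helpful elaborations but not new ideas.
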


\begin{proof}
Let $J = \{ \hat{k} : k \in K_X \}$ and $\overline{A}$ be as defined in Section 2.  The assumptions here allow us to apply Theorem \ref{reggwps}, so we can conclude that $R=J \overline{A}$ is a subgroup of $\aut W$ that acts regularly on $V(W)$.  Therefore, $W$ is a Cayley graph on $R$.
\end{proof}

We now proceed to determine the connection set for the Cayley graph in Theorem \ref{main}.  Let $e$ be the vertex in $Y_{X_1}$ labelled by the identity element of $C_{X_1}$.  Since $R$ acts regularly on $W$, the set $Y = V(W)$ is equal to $\{ e \cdot r : r \in R \}$.  For each $X \in \Sigma$, we have set bijections between $Y_X$, $C_X$, and $J \overline{H} \, \overline{a_X}$.  So each vertex in $Y$ is labelled by an element of $C_X$ and as $e \cdot r$ for some $r \in J \overline{H} \, \overline{a_X}$.  For subsets $S$ of $C_X$, $X \in \Sigma$, we denote by ${S}^*$ the set of elements of $R$ which correspond to elements of $S$ under these bijections.

With the labelling by elements of $R$ the connection set for this Cayley graph is determined by the elements $r \in R$ for which $(e \cdot 1, e \cdot r)$ is an edge in $W$.  If $e \cdot r$ is the label of a vertex in $Y_{X_1}$, then $r = \hat{\ell} \overline{h}$ for $\ell \in K_X$ and $h \in H$.  If $(e \cdot 1, e \cdot r) \in E(B_{X_1})$, then $r = \hat{\ell} \overline{h}$ corresponds in $C_{X_1}$ to $\ell t_h \in S_{X_1}$.  Therefore, $r \in [{S_{X_1}}]^*$.

If $r$ represents the same vertex as an element of $Y_X$, for $X \ne X_1$, then $r = \hat{\ell} \overline{h} \, \overline{a_X}$ for some $\ell \in K_X$ and $h \in H$.  The vertex of $Y_X$ represented by $e \cdot r$ for this $r$ will be $\phi_{X_1X}(\ell t_h)$.  It follows that the image of this vertex under $\lambda_X$ will be equal to $\lambda_{X_1}(\ell t_h) a_X = h a_X$, and so
$$\begin{array}{rcl}
(e \cdot 1, e \cdot r) \in E(W)&\iff&(\lambda_{X_1}(1_{C_{X_1}}), \lambda_X (\phi_{X_1X}(\ell t_h))) \in E(G)\\
&\iff& (1_A, h a_X) \in E(G) \\
&\iff& h a_X \in S_A \cap Ha_X \\
&\iff& \phi_{X_1X}(\ell t_h) \in \lambda_X^{-1}(S_A \cap Ha_X).
\end{array}$$
Therefore, $r \in [{\lambda_X^{-1}(S_A \cap Ha_X)}]^*$, and so we can conclude that $W = Cay( R, S_R)$, where $R = J \overline{A}$ and
$$ S_R =[{S_{X_1}}]^* \cup \big( \bigcup_{X \ne X_1} [{\lambda_X^{-1}(S_A \cap Ha_X)}]^* \big). $$


\begin{example} {\rm
Let $A = \langle x,y : x^3=y^2=1, xy=yx^2\rangle \simeq D_6$ and $G = Cay(A,\{x,x^2,y\})$.  Let $C_X = \langle a,b : a^3=b^3=1,ab=ba \rangle \simeq C_3 \times C_3$ and $B_X = Cay(C_X, \{a,a^2,b,b^2\} )$.  When $H=\langle x \rangle$, we get $H = X$ and $X' = Hy$, and the induced subgraphs $G[X] = Cay(H,\{x,x^2\})$ and $G[X'] = G[X]y$ are disjoint triangles.

\begin{figure}[htb]
\begin{center}
\begin{tikzpicture}
\filldraw (10,1) circle [radius=0.04] node  [left] {$y$} -- (11,2) circle [radius=0.04] node  [right] {$x^2y$} -- (12,1) circle [radius=0.04] node  [right] {$xy$} -- (10,1) circle [radius=0.04] node  [left] {$y$} -- (10,3) circle [radius=0.04] node  [left] {$e$} -- (11,4) circle [radius=0.04] node  [above] {$x$} -- (12,3) circle [radius=0.04] node  [right] {$x^2$} -- (10,3) circle [radius=0.04] node  [left] {$e$};
\filldraw (11,2) circle [radius=0.04] node  [right] {$x^2y$} -- (11,4) circle [radius=0.04] node  [above] {$x$};
\filldraw (12,1) circle [radius=0.04] node  [right] {$xy$} -- (12,3) circle [radius=0.04] node  [right] {$x^2$};
\end{tikzpicture}
\subfloat{$G$}\;\;\;\;
\begin{tikzpicture}
\filldraw (0,0) circle [radius=0.04] node  [below] {$b^2$} -- (1,1) circle [radius=0.04] node  [left] {$ab^2$} -- (2,0) circle [radius=0.04] node  [below] {$a^2b^2$} -- (0,0) circle [radius=0.04] node  [below] {$b^2$} -- (0,2) circle [radius=0.04] node  [left] {$b$} -- (1,3) circle [radius=0.04] node  [left] {$ab$} -- (2,2) circle [radius=0.04] node  [right] {$a^2b$} -- (0,2) circle [radius=0.04] node  [left] {$b$} -- (0,4) circle [radius=0.04] node  [left] {$1$} -- (1,5) circle [radius=0.04] node  [above] {$a$} -- (2,4) circle [radius=0.04] node  [right] {$a^2$} -- (0,4) circle [radius=0.04] node  [left] {$1$};
\filldraw (1,1) circle [radius=0.04] node [left] {$ab^2$} -- (1,3) circle [radius=0.04] node  [left]{$ab$} -- (1,5) circle [radius=0.04] node  [above] {$a$};
\filldraw (2,0) circle [radius=0.04] node [below] {$a^2b^2$} -- (2,2) circle [radius=0.04] node  [right] {$a^2b$} -- (2,4) circle [radius=0.04] node [right] {$a^2$};
\filldraw (0,4) circle [radius=0.04] node [left] {$1$} -- (0,0) circle [radius=0.04] node  [below]{$b^2$};
\filldraw (1,5) circle [radius=0.04] node [above] {$a$} -- (1,1) circle [radius=0.04] node  [left]{$ab^2$};
\filldraw (2,4) circle [radius=0.04] node [right] {$a^2$} -- (2,0) circle [radius=0.04] node  [below]{$a^2b^2$};

\draw [fill] (1,5) circle [radius=0.04] node [above] {$a$};
\draw [fill] (1,1) circle [radius=0.04] node [left] {$ab^2$};
\path (1,5) edge [bend right] (1,1);
\draw [fill] (2,4) circle [radius=0.04] node [right] {$a^2$};
\draw [fill] (2,0) circle [radius=0.04] node [below] {$a^2b^2$};
\path (2,4) edge [bend right] (2,0);
\draw [fill] (0,4) circle [radius=0.04] node [left] {$1$};
\draw [fill] (0,0) circle [radius=0.04] node [below] {$b^2$};
\path (0,4) edge [bend right] (0,0);
 \end{tikzpicture}
\subfloat{$B_X$}\;\;\;\;\;
\begin{tikzpicture}
\filldraw (6,-3) circle [radius=0.04] node  [below] {$b^2\bar{y}$} -- (7,-2) circle [radius=0.04] node  [left] {$ab^2\bar{y}$} -- (8,-3) circle [radius=0.04] node  [below] {$a^2b^2\bar{y}$} -- (6,-3) circle [radius=0.04] node  [below] {$b^2\bar{y}$} -- (6,-1) circle [radius=0.04] node [left] {$b\bar{y}$} -- (7,0) circle [radius=0.04] node  [right] {$ab\bar{y}$} -- (8,-1) circle [radius=0.04] node  [right] {$a^2b\bar{y}$} -- (6,-1) circle [radius=0.04] node  [left] {$b\bar{y}$} -- (6,1) circle [radius=0.04] node  [left] {$1\bar{y}$} -- (7,2)
circle [radius=0.04] node  [above] {$a\bar{y}$} -- (8,1) circle [radius=0.04] node  [right] {$a^2\bar{y}$} -- (6,1) circle [radius=0.04] node  [left] {$1\bar{y}$};
\filldraw (7,-2) circle [radius=0.04] node [left] {$ab^2\bar{y}$} -- (7,0) circle [radius=0.04] node  [right] {$ab\bar{y}$} -- (7,2) circle [radius=0.04] node  [above] {$a\bar{y}$};
\filldraw (8,-3) circle [radius=0.04] node [below] {$a^2b^2\bar{y}$} -- (8,-1) circle [radius=0.04] node  [right] {$a^2b\bar{y}$} -- (8,1) circle [radius=0.04] node [right] {$a^2\bar{y}$};
\filldraw (6,1) circle [radius=0.04] node [left] {$1\bar{y}$} -- (6,-3) circle [radius=0.04] node  [below]{$b^2\bar{y}$};
\filldraw (7,2) circle [radius=0.04] node [above] {$a\bar{y}$} -- (7,-2) circle [radius=0.04] node  [left]{$ab^2\bar{y}$};
\filldraw (8,1) circle [radius=0.04] node [right] {$a^2\bar{y}$} -- (8,-3) circle [radius=0.04] node  [below]{$a^2b^2\bar{y}$};

\draw [fill] (6,1) circle [radius=0.04] node [left] {$1\bar{y}$};
\draw [fill] (6,-3) circle [radius=0.04] node [below] {$b^2\bar{y}$};
\path (6,1) edge [bend right] (6,-3);
\draw [fill] (7,2) circle [radius=0.04] node [above] {$a\bar{y}$};
\draw [fill] (7,-2) circle [radius=0.04] node [left] {$ab^2\bar{y}$};
\path (7,2) edge [bend right] (7,-2);
\draw [fill] (8,1) circle [radius=0.04] node [right] {$a^2\bar{y}$};
\draw [fill] (8,-3) circle [radius=0.04] node [below] {$a^2b^2\bar{y}$};
\path (8,1) edge [bend right] (8,-3);
\end{tikzpicture}
\subfloat{$B_{X'}$}
\end{center}
\caption{Graph $G$ and set of graphs $\{B_X\}_{X\in\Sigma}$}
\end{figure}
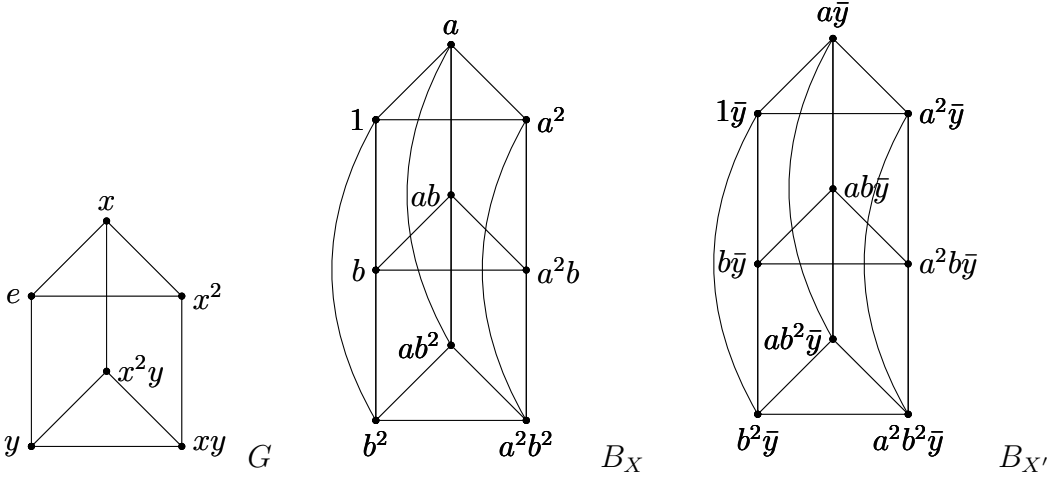
Right multiplication by $y$ is a map $\varepsilon_{XX'}: X \rightarrow X'$.  Let $Y_X=C_X$ and let $Y_{X'} = Y_X \bar{y}$ be a copy of $C_X$, with right multiplication by $\bar{y} \in \bar{A}$ identified with $\phi_{XX'}: Y_X \rightarrow Y_{X'}$, so $Y_{X'} = \{ c \bar{y} : c \in Y_X \}$.

Let $\theta_X: C_X \rightarrow H$ be the group epimorphism that maps $a \mapsto x$ and $b \mapsto 1$, so we have $K_X = \langle b \rangle$.  We can identify $C_{X'}$ with $C_X^{\bar{y}}$, and then the right multiplications of $C_X$ on $Y_X$ and of $C_{X'}$ on $Y_{X'}$ will act as graph isomorphisms.

Let $\lambda_X: B_X \rightarrow G[X]$ be equal to $\theta_X$ as a map of sets, and let $\lambda_{X'} = \varepsilon_{XX'} \circ \lambda_X \circ \phi_{XX'}^{-1}$.  Then $\lambda_X$ and $\lambda_{X'}$ are graph epimorphisms, and we can define the generalized $X$-join $W=G \circ_{\lambda} \{ B_X, B_{X'} \}$.

\begin{figure}[htb]
\begin{center}
\begin{tikzpicture}
\filldraw (0,0) circle [radius=0.04] node  [left] {$\widehat{b^2}\bar{e}$} -- (1,1) circle [radius=0.04] node  [left] {$\widehat{b^2}\bar{x}$} -- (2,-1) circle [radius=0.04] node  [below] {$\widehat{b^2}\overline{x^2}$} -- (0,0) circle [radius=0.04] node  [left] {$\widehat{b^2}\bar{e}$} -- (0,2.5) circle [radius=0.04] node  [left] {$\hat{b}\bar{e}$} -- (1,3.5) circle [radius=0.04] node  [left] {$\hat{b}\bar{x}$} -- (2,2) circle [radius=0.04] node  [below] {$\hat{b}\overline{x^2}$} -- (0,2.5) circle [radius=0.04] node  [left] {$\hat{b}\bar{e}$} -- (0,4.5) circle [radius=0.04] node  [left] {$\bar{e}$} -- (1,5.5) circle [radius=0.04] node  [above] {$\bar{x}$} -- (2,4) circle [radius=0.04] node  [below] {$\overline{x^2}$} -- (0,4.5) circle [radius=0.04] node  [left] {$\bar{e}$};
\filldraw (1,1) circle [radius=0.04] node [left] {$\widehat{b^2}\bar{x}$} -- (1,3.5) circle [radius=0.04] node  [left] {$\hat{b}\bar{x}$} -- (1,5.5) circle [radius=0.04] node  [above] {$\bar{x}$};
\filldraw (2,-1) circle [radius=0.04] node [below] {$\widehat{b^2}\overline{x^2}$} -- (2,2) circle [radius=0.04] node  [below] {$\hat{b}\overline{x^2}$} -- (2,4) circle [radius=0.04] node [below] {$\overline{x^2}$};
\filldraw (5,-2) circle [radius=0.04] node  [below] {$\overline{x^2y}$} -- (7,0) circle [radius=0.04] node
[right] {$\hat{b}\overline{x^2y}$} -- (9,-1) circle [radius=0.04] node  [right] {$\widehat{b^2}\overline{x^2y}$} -- (5,-2) circle [radius=0.04] node  [below] {$\overline{x^2y}$} -- (5,1) circle [radius=0.04] node [below] {$\overline{xy}$} -- (7,2) circle [radius=0.04] node  [right] {$\hat{b}\overline{xy}$} -- (9,1) circle [radius=0.04] node  [right] {$\widehat{b^2}\overline{xy}$} -- (5,1) circle [radius=0.04] node  [below] {$\overline{xy}$} -- (5,3) circle [radius=0.04] node  [above] {$\bar{y}$} -- (7,4) circle [radius=0.04] node  [above] {$\hat{b}\bar{y}$} -- (9,3) circle [radius=0.04] node  [right] {$\widehat{b^2}\bar{y}$} -- (5,3) circle [radius=0.04] node  [above] {$\bar{y}$};
\filldraw (7,0) circle [radius=0.04] node [right] {$\hat{b}\overline{x^2y}$} -- (7,2) circle [radius=0.04] node  [right] {$\hat{b}\overline{xy}$} -- (7,4) circle [radius=0.04] node  [above] {$\hat{b}\bar{y}$};
\filldraw (9,-1) circle [radius=0.04] node [right] {$\widehat{b^2}\overline{x^2y}$} -- (9,1) circle [radius=0.04] node  [right] {$\widehat{b^2}\overline{xy}$} -- (9,3) circle [radius=0.04] node [right] {$\widehat{b^2}\bar{y}$};
\filldraw (1,5.5) circle [radius=0.04] node  [above] {$\bar{x}$} -- (7,4) circle [radius=0.04] node  [above] {$\hat{b}\bar{y}$} -- (0,4.5) circle [radius=0.04] node  [left] {$\bar{e}$} -- (5,3) circle [radius=0.04] node  [above] {$\bar{y}$} -- (2,4) circle [radius=0.04] node  [below] {$\overline{x^2}$} -- (9,3) circle [radius=0.04] node  [right] {$\widehat{b^2}\bar{y}$} -- (1,5.5) circle [radius=0.04] node  [above] {$\bar{x}$};
\filldraw (1,5.5) circle [radius=0.04] node  [above] {$\bar{x}$} -- (5,3) circle [radius=0.04] node  [above] {$\bar{y}$};
\filldraw (2,4) circle [radius=0.04] node  [below] {$\overline{x^2}$} -- (7,4) circle [radius=0.04] node  [above] {$\hat{b}\overline{y}$};
\filldraw (0,4.5) circle [radius=0.04] node  [left] {$\bar{e}$} -- (9,3) circle [radius=0.04] node  [right] {$\widehat{b^2}\bar{y}$};
\filldraw (1,3.5) circle [radius=0.04] node  [left] {$\hat{b}\bar{x}$} -- (7,2) circle [radius=0.04] node  [right] {$\hat{b}\overline{xy}$} -- (0,2.5) circle [radius=0.04] node  [left] {$\hat{b}\bar{e}$} -- (5,1) circle [radius=0.04] node  [below] {$\overline{xy}$} -- (2,2) circle [radius=0.04] node  [below] {$\hat{b}\overline{x^2}$} -- (9,1) circle [radius=0.04] node  [right] {$\widehat{b^2}\overline{xy}$} -- (1,3.5) circle [radius=0.04] node  [left] {$\hat{b}\bar{x}$};
\filldraw (1,3.5) circle [radius=0.04] node  [left] {$\hat{b}\bar{x}$} -- (5,1) circle [radius=0.04] node  [below] {$\overline{xy}$};
\filldraw (2,2) circle [radius=0.04] node  [below] {$\hat{b}\overline{x^2}$} -- (7,2) circle [radius=0.04] node  [right] {$\hat{b}\overline{xy}$};
\filldraw (0,2.5) circle [radius=0.04] node  [left] {$\hat{b}\bar{e}$} -- (9,1) circle [radius=0.04] node  [right] {$\widehat{b^2}\overline{xy}$};
\filldraw (1,1) circle [radius=0.04] node  [left] {$\widehat{b^2}\bar{x}$} -- (7,0) circle [radius=0.04] node  [right] {$\hat{b}\overline{x^2y}$} -- (0,0) circle [radius=0.04] node  [left] {$\widehat{b^2}\bar{e}$} -- (5,-2) circle [radius=0.04] node  [below] {$\overline{x^2y}$} -- (2,-1) circle [radius=0.04] node  [below] {$\widehat{b^2}\overline{x^2}$} -- (9,-1) circle [radius=0.04] node  [right] {$\widehat{b^2}\overline{x^2y}$} -- (1,1) circle [radius=0.04] node  [left] {$\hat{b^2}\bar{x}$};
\filldraw (1,1) circle [radius=0.04] node  [left] {$\widehat{b^2}\bar{x}$} -- (5,-2) circle [radius=0.04] node  [below] {$\overline{x^2y}$};
\filldraw (2,-1) circle [radius=0.04] node  [below] {$\widehat{b^2}\overline{x^2}$} -- (7,0) circle [radius=0.04] node  [right] {$\hat{b}\overline{x^2y}$};
\filldraw (0,0) circle [radius=0.04] node  [left] {$\widehat{b^2}\bar{e}$} -- (9,-1) circle [radius=0.04] node  [right] {$\widehat{b^2}\overline{x^2y}$};

\draw [fill] (1,5.5) circle [radius=0.04] node [above] {$\bar{x}$};
\draw [fill] (1,1) circle [radius=0.04] node [left] {$\widehat{b^2}\bar{x}$};
\path (1,5.5) edge [bend right] (1,1);
\draw [fill] (2,4) circle [radius=0.04] node [below] {$\overline{x^2}$};
\draw [fill] (2,-1) circle [radius=0.04] node [below] {$\widehat{b^2}\overline{x^2}$};
\path (2,4) edge [bend left] (2,-1);
\draw [fill] (0,4.5) circle [radius=0.04] node [left] {$\bar{e}$};
\draw [fill] (0,0) circle [radius=0.04] node [left] {$\widehat{b^2}\bar{e}$};
\path (0,4.5) edge [bend right] (0,0);
\draw [fill] (5,3) circle [radius=0.04] node [above] {$\bar{y}$};
\draw [fill] (5,-2) circle [radius=0.04] node [below] {$\overline{x^2y}$};
\path (5,3) edge [bend right] (5,-2);
\draw [fill] (7,4) circle [radius=0.04] node [above] {$\hat{b}\bar{y}$};
\draw [fill] (7,0) circle [radius=0.04] node [right] {$\hat{b}\overline{x^2y}$};
\path (7,4) edge [bend right] (7,0);
\draw [fill] (9,3) circle [radius=0.04] node [right] {$\widehat{b^2}\bar{y}$};
\draw [fill] (9,-1) circle [radius=0.04] node [right] {$\widehat{b^2}\overline{x^2y}$};
\path (9,3) edge [bend right] (9,-1);
\end{tikzpicture}
\end{center}
\caption{Graph $W$}
\end{figure}

We have that $J = \{ (1,1), (b,b^{\bar{y}}), (b^2, b^{2\bar{y}}) \}$. We can define $\bar{y}$ so that $\{\bar{1},\bar{y}\}$ is a group, and also a transversal for $T_H$ in $\bar{A}$, which will then be group isomorphic to $A$. Indeed, setting $\bar{x} = (c \mapsto ca, c\bar{y} \mapsto ca^2 \bar{y})$ and $\overline{x^2} = (c \mapsto ca^2, c\bar{y} \mapsto ca \bar{y})$ will result in $\bar{A} = \langle \bar{x}, \bar{y} \rangle$ being isomorphic to $A$.  As the elements of $\bar{A}$ commute with $J$, $J \bar{A}$ will be a group isomorphic to $D_6 \times C_3$ that acts regularly on $V(W)=Y$.  So $W$ is a Cayley graph.

The connection set for this Cayley graph will be the union of $[{S_X}]^*= \{ \bar{x}, \overline{x^2}, \hat{b}, \widehat{b^2} \}$ and $[\lambda_{X'}^{-1}(y)]^* = \{ \bar{y}, \hat{b}\bar{y}, \widehat{b^2}\bar{y} \}$.
}
\end{example}

\begin{example} {\rm
Let $A = C_2 \times C_2 \times C_2 = \langle a \rangle \times \langle b \rangle \times \langle c \rangle$ be the elementary abelian group with identity $e$.  Let $G$ be the Cayley graph $Cay(A,\{a,b,c\})$.  Let $H=\langle a \rangle \times \langle b \rangle$, and let $\Sigma = \{X=H, X'=Hc \}$.  Then $G[X] = Cay(H, \{a,b\})$ is graph isomorphic to $G[X']$ via the map $\varepsilon_{XX'}$ that is defined by right multiplication by $c$.

Let $C_X \simeq Q_8 = \{\pm 1, \pm i, \pm j, \pm k \}$ be the quaternion group of order $8$, with the usual conventions $i^2=j^2=k^2=-1$ and $k = ij =-ji$.  We have a group epimorphism $\theta_X: C_X \rightarrow H$ given by $\theta_X(i) = a$ and $\theta_X(j)=b$ with kernel $K_X = \{\pm 1\}$.  If $B_X = Cay(Q_8,\{\pm i, \pm j \})$, then $\theta_X$ induces a graph epimorphism $\lambda_X:B_X \rightarrow G[X]$ for which $|\lambda_X^{-1}(g)|=2$ for all $g \in A$.   Define $B_{X'}$ to be an isomorphic copy of $B_X$, and let $\phi_{XX'} : B_X \rightarrow B_{X'}$ be a fixed graph isomorphism.   Define $\lambda_{X'}: B_{X'} \rightarrow G[X']$ by $\lambda_{X'} = \varepsilon_{XX'} \circ \lambda_X \circ \phi_{XX'}^{-1}$.  Then we have a generalized $X$-join of Cayley graphs $G \circ_{\lambda} \{ B_X, B_{X'} \}$.  The assumptions of the main theorem are satisfied because $A = H \times C_2$ and $K_X = Z(C_X)$.

In order to understand the group $R$, we need to first understand the groups $J$ and $\bar{A}$.  Let $Y_X$ and $Y_{X'}$ be the vertices of $B_X$ and $B_{X'}$.  First, it is clear that $J = \{ \hat{1}, \widehat{-1} \}$.  For every element $\bar{a} \in \bar{A}$ and every component $Y_{X'}$ of $Y$, there will be $|K_{X}|$ choices for the definition of the restriction of $\bar{a}$ to $Y_{X'}$, and we must be able to choose these definitions so that $\bar{A}^2 \subseteq J\bar{A}$ and $\bar{A}J = J \bar{A}$.

We will assume $\bar{e}$ is the identity map on $Y$ when $e$ is the identity of $A$.  Since $\varepsilon_{XX'}$ agrees with right multiplication by $c$  we can assume $\phi_{XX'} = \overline{c}$.  For the remaining six elements of $\bar{A}$, the choices given in the following table make $J \bar{A}$ into a group:

$$\begin{array}{r|c|c r|c|c}
\bar{g} \in \bar{A} & x \in Y_X \mapsto \quad & x\bar{c} \in Y_{X'} \mapsto \\ \hline
\bar{e} & x & x\bar{c} \\
\bar{a} & xi & xi \bar{c} \\
\bar{b} & xj & xj \bar{c} \\
\overline{ab} & xk & xk \bar{c} \\
\bar{c} & x \bar{c} & x \\
\overline{ac} & xi \bar{c} & xi \\
\overline{bc} & xj \bar{c} & xj \\
\overline{abc} & xk \bar{c} & xk
\end{array} $$

With elements of $\bar{A}$ defined this way, $\bar{A}$ is not a group, but nevertheless $\bar{A}^2 \subseteq J \bar{A}$ and the elements of $\bar{A}$ commute with $\widehat{-1}$, which maps $y \rightarrow -y$ for all $y \in Y$. So $R= J \bar{A}$ is a group, and it is easy to see from the table that it will act regularly on the vertices of $W = G \circ_{\lambda} \{B_X, B_{X'} \}$.
To calculate the connection set, let $1_X$ be the vertex labelled by the identity of $C_X$. For $y \in Y_X$, $(1_X,y) \in E(W)$ for $y \in \{ \pm i, \pm j \}$.  The elements of $J \bar{A}$ that map $1_X$ to these vertices are the ones in $J \bar{a} \cup J \bar{b}$.  These are the elements of $J \bar{A}$ that map $1_X$ to an element of $[{S_X}]^*$.  For $y \bar{c} \in Y_{X'}$, $(1_X, y \bar{c}) \in E(W)$ if and only if $(1_A,\lambda_X(y)c) \in S_A$, which is equivalent to $y \bar{c} \in \{ \pm 1 \bar{c} \}$.  The elements of $J \bar{A}$ that map $1_X$ to these vertices are the elements of $J \bar{c}$.   Therefore,
$$ {S_R} = J \bar{a} \cup J \bar{b} \cup J \bar{c} =[{S_X}]^* \cup [{\lambda_{X'}^{-1}(S_A \cap Hc)}]^*. $$
}
\end{example}

\begin{corollary}
Let $G=Cay(A,S_A)$ be a Cayley graph and $\{B_x=Cay(C_x,S_x)\}_{x \in A}$ be a set of pairwise isomorphic Cayley graphs. Then the
$G$-join $G[B_x]_{x\in A}$ is a Cayley graph.

\end{corollary}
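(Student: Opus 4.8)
The plan is to obtain this corollary from the special case of Theorem \ref{main} in which the partition $\Sigma$ of $V(G)=A$ consists of singletons. Recall from the introduction that the $G$-join $G[B_x]_{x\in A}$ is precisely the generalized $X$-join $G\circ_\lambda\{B_x\}_{x\in A}$ associated to the partition $\Sigma=\{\{x\}:x\in A\}$: inside the block $Y_x$ one keeps all edges of $B_x$, while between two distinct blocks $Y_x$ and $Y_{x'}$ every vertex of $Y_x$ is joined to every vertex of $Y_{x'}$ exactly when $(x,x')\in E(G)$, since each $\lambda_x$ is the constant map onto the one-vertex graph $G[\{x\}]$. This partition into singletons is automatically a system of blocks for the regular action of $A$ on itself, since $\{x\}^f=\{xf\}$ for every $f\in A$.

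Next I would fix $X_1=\{1_A\}$ and check the hypotheses of Theorem \ref{main} in this setting. Because $A$ acts regularly on itself, the setwise stabilizer $H$ of the singleton $X_1$ is the point stabilizer of $1_A$, and hence $H=\{1_A\}$. The group epimorphism $C_{X_1}\to H$ required by the theorem is then forced to be the trivial homomorphism, so its kernel is $K_{X_1}=C_{X_1}$, and its values agree with the (constant) graph epimorphism $\lambda_{X_1}:B_{X_1}\to G[X_1]$. Now all three hypotheses of Theorem \ref{main} hold for trivial reasons: $A=HC_A(H)$ since $C_A(H)=A$; $C_{X_1}=K_{X_1}C_{C_{X_1}}(K_{X_1})$ since $K_{X_1}=C_{X_1}$; and $A=H\times A$ supplies the required direct-factor decomposition (equivalently, $C_{X_1}\simeq K_{X_1}\rtimes H$ because $H$ is trivial). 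Theorem \ref{main} therefore applies directly, and we conclude that $W=G[B_x]_{x\in A}$ is a Cayley graph — indeed a Cayley graph on the regular subgroup $R=J\overline{A}\le\aut(W)$ furnished by that theorem.

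The only point that calls for care is the identification made in the first paragraph: that the $G$-join really is the generalized $X$-join for the singleton partition — reading $G[\{x\}]$ as the one-vertex graph and taking $\lambda_x$ to be constant — and that the trivial homomorphism is an admissible choice of $\theta_{X_1}$. Beyond this bookkeeping there is no genuine obstacle, because every group-theoretic condition occurring in Theorems \ref{reggwps} and \ref{main} degenerates to a triviality once $H=\{1_A\}$: the coset transversal $T_H$ becomes the trivial group, so $\overline{A}$ is already a group isomorphic to $A$, the diagonal $J$ is isomorphic to $C_{X_1}$, and $R=J\overline{A}$ has order $|C_{X_1}|\,|A|=|V(W)|$ and acts regularly on $V(W)$.
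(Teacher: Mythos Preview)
Your proof is correct and follows essentially the same approach as the paper: take the singleton partition so that $X_1=\{1_A\}$, $H=\{1_A\}$, and $K_{X_1}=C_{X_1}$, then observe that the hypotheses $A=HC_A(H)$, $C_{X_1}=K_{X_1}C_{C_{X_1}}(K_{X_1})$, and $A=H\times A$ of Theorem~\ref{main} hold trivially. Your version is simply more detailed in spelling out the identification of the $G$-join with the generalized $X$-join for the singleton partition and in unpacking why the group-theoretic conditions degenerate.
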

\begin{proof}
With the notation of Theorem \ref{main} if $X_1=\{e\}$, then $H=\{e\}$ and $K_{X_1}=C_{X_1}$.
Clearly, $A = H C_A(H)$, $C_{X_1}=K_{X_1} C_{C_{X_1}}(K_{X_1})$, and  $A = H \times A$.  So  it follows from Theorem \ref{main}
that the $G$-join $G[B_x]_{x\in A}$ is a Cayley graph.
\end{proof}

{\small

\medskip
\noindent {\bf Acknowledgement.}
The third author would like to thank Dr. Herman and the University of Regina for their hospitality during which much of this work was completed.
}

\noindent Department of Mathematics and Statistics, University of Regina, 3737 Wascana Parkway, Regina, SK, S4S 0A2  Canada\\
\noindent {\tt email: Allen.Herman@uregina.ca}

\smallskip
\noindent Department of Pure Mathematics,  Faculty of Mathematics and Statistics,\\ University of Isfahan,
P.O. Box: 81746-73441, Isfahan, Iran \\
\noindent {\tt email: bagherian@sci.ui.ac.ir}

\smallskip
\noindent Department of Pure Mathematics,  Faculty of Mathematics and Statistics,\\ University of Isfahan,
P.O. Box: 81746-73441, Isfahan, Iran \\
\noindent {\tt email: h.memarzadeh.762@sci.ui.ac.ir}

\end{document}